\DeclareMathOperator*{\rot}{\rot}
\renewcommand{\rot}{\mathop{\mathrm{rot}}\nolimits}
\renewcommand{\Im}{\mathop{\mathrm{Im}}\nolimits}
\renewcommand{\Re}{\mathop{\mathrm{Re}}\nolimits}
\theoremstyle{plain}\newtheorem{theorem}{Theorem}%[chapter]
\theoremstyle{plain}%[chapter]
\theoremstyle{plain}%[chapter]
\theoremstyle{plain}%[chapter]
\theoremstyle{definition}\newtheorem{definition}{Definition}%[chapter]
\theoremstyle{remark}\newtheorem{remark}{Remark}%[chapter]
\begin{document}
{\Large
\begin{center}
\textbf{Inverse Problem of Diffraction by an Inhomogeneous Solid with a Piecewise H\"{o}lder Refractive Index}\\
M.Yu. Medvedik, Yu.G. Smirnov and A.A. Tsupak\\
\end{center}
}
\begin{center}
(Penza State University, Penza, Russia)\\
e-mail: smirnovyug@mail.ru; altsupak@yandex.ru\\
\end{center}
%
%\small{
%\textbf{Аннотация.}
%
%\textbf{Ключевые слова:} обратная скалярная задача дифракции, определение параметров неоднородности рассеивателя, уравнение Липпмана-Швингера, Метод коллокации.
%}
%

\section*{Introduction}
Nowadays, when solving inverse problems of electrodynamics, one meets a problem of obtaining accurate solutions using a relatively small number of measurements. The present work is devoted to the solution of this very problem.

The urgency of the problem solved is due, on the one hand, to important applications, e.g., microwave tomography for early of breast cancer and  reconstruction of the characteristics of samples of composite anisotropic materials.

On the other hand, methods and algorithms for solving inverse problems are important from the theoretical point of view in mathematical physics.

New numerical adaptive methods, parallel computational algorithms and supercomputer computations are among the most promising approaches to solving the inverse problems.

In particular, there are several approaches and many papers devoted to solving inverse problems of electrodynamics. Among theoretical works, in which the questions of the existence and uniqueness of solutions to the inverse problems are considered, we only mention the work \cite{a_Brown}, since it contains an extensive and up-to-date bibliography on the subject (63 references).

The most important methods for solving inverse problems are those that can be numerically implemented. These methods are mainly based on solving the hyperbolic systems of differential equations in the time domain using the finite differences or finite elements methods, with subsequent minimization of the corresponding functionals and Tikhonov regularization. Such methods and approaches are described in the monographs \cite{b_Ammari} - \cite{b_Romanov}.

The volume singular integral equations method is an alternative approach to solving the inverse problems of electrodynamics. It is not widely used in electrodynamics (apparently, due to the greater complexity of numerical implementation in comparison with the finite element or finite difference methods. Nevertheless, it is successfully applied by both the authors of the paper and other researchers (e.g., \cite{b_ColtonKress_Inv}--\cite {a_Smirnov}) to solving (direct) problems of diffraction of electromagnetic waves by dielectric solids.

The method of volume singular integral equations was applied, e.g., in \cite{b_ColtonKress_Inv} and \cite{a_ShestopalovSmirnov}, where the problems of  objects’ reconstruction were solved using asymptotic data (far zone fields).

In the present paper, we use the method of integral equations for reconstruction of an unknown refractive index $k(x)$ (this function describes an inhomogeneous volume obstacle $P$ of a monochromatic wave). To solve the problem, we use a finite number of values of the given scalar field in the near zone, i.e., at several points in some region $D,$ lying outside the scatterer $P.$ Problems exactly like this one arise, e.g., in microwave tomography.

The article consists of two parts, the first of which is devoted to theoretical investigation of the problem and the description of the proposed method for its solution. The second part contains the description of computational experiments. Solving the inverse problem we pay considerable attention to the investigation of the direct diffraction problem. This is necessary for proving the equivalency between the differential and the integral formulations (the latter is used to solve the inverse problem) of the inverse problem of reconstructing the refractive index.

The first section of the article is devoted to the investigation of a direct problem diffraction of an external field by a volume body $P.$ The solid $P$ is characterized by a given refractive index.

First, we consider a boundary value problem for the Helmholtz equation in the classical formulation. Then, the original problem is reduced to the integral Lippman-Schwinger equation with respect to the unknown total field $u$ in the inhomogeneity region $P.$ In addition, we define $u$ outside $P$ using the integral representation. We assume that the incident wave is defeined by a point source located outside $P,$ whereas the refractive index is assumed to be a piecewise-Holder function.

The Lippmann-Schwinger equation is convenient to be studied in the $L_2 (P)$ space for two reasons. First, in this case the operator of the equation is a  Fredholm operator with index zero. Second, we are eager to use piecewise constant basis functions for numerical implementation of the method.

Investigating the integral equation in such a wide space, we show that, given a sufficiently smooth right-hand side, one obtains a smooth solution to the equation which, furthermore, represents a classical solution of the diffraction problem.

Thus, we show that the boundary value problem for the Helmholtz equation is equivalent to a system of integral equations, which is used to reconstruct the refractive index in the second section of the article.

We find the solution to the inverse problem using the two-step method (TSM). First, we determine the "current" $ J = (k^2 - k_0^2)u$ in the inhomogeneity region $P$ using the values of the total field given in the domain $D.$ For this end, we solve the integral equation of the first kind. It is shown in the paper that the solution to the integral equation is, in general, not unique. However, we prove that the solution is unique in the class of piecewise-constant functions. Such a choice of the solution space is seemingly valid for solving the inverse problem. Indeed, we are eager to use the collocation method with piecewise constant basis functions with compact support. In addition, the complete field $u$ corresponding to the piecewise-constant $J$ satisfies all the smoothness conditions that were formulated in the direct problem. That is due  to the representation of the field $u$ via the volume potential with the density $J.$

In the second step of the proposed method, we express the desired function $k (x) $ via the current $ J (x) $ and the incident wave $u_0$ using the Lippman-Schwinger equation in the domain $P.$

In the proposed method, we immediately define (and fixate) the computation grid. Therefore, classical theorems on the convergence of the method are not considered in the article. We believe that such an approach is valid in practice since it is usually clear what accuracy (and, consequently, what grid) is needed. E.g., it is sufficient to consider grids with  5mm step in the method of microwave tomography for the breast cancer early diagnosis (thus, any grid refinement is no use).

The main advantage of the proposed method is the reduction of the original boundary value problem to the solution of the \ emph {linear} integral equation with the subsequent explicit calculation of the desired function. Thus, we are spared, first, the need to solve any nonlinear equations. Secondly, we don’t apply iterative methods that require good choice of an initial approximation.

One of the main results of the present paper is the proof of the uniqueness theorem for the solution to the integral equation of the first kind in the class of piecewise-constant functions. In addition, we show that even in the class of analytic functions the solution is not unique.

The main difficulty of the numerical method implementation is in solving the integral equation of the first kind with a smooth kernel. Note, that we apply collocation method. The corresponding matrices are ill-conditioned (albeit nondegenerate), which implies a certain instability with respect to the right-hand side of the equation. As a result, given distorted original data (i.e., the given values of the field in the domain $D$), one obtains "false" inhomogeneities in the region $P.$  In this paper, such inhomogeneities are called "artifacts" of the reconstructed function $k(x).$ The computational experiments showed that the "artifacts" emerged when the  domain $D$ was moved away from the body $P.$

The authors propose two ways to eliminate the "artifacts."

The first method consists in the screening out "extraneous" noise, i.e. in filtering the input data of the integral equation. When implementing this procedure, we assume that the "true" noise should be a small (but sufficiently smooth) disturbance of the field. Data that does not satisfy this condition is discarded.

The second method consists in changing the location of the field source (another choice of the point $ x_0 $) and the receivers (these are the collocation points in the domain $ D $). As a result, we determine the true inhomogeneities in the region $P$ and discard the false ones (the "artifacts"). Note that the “artifacts”  may change their location. In addition, the "artifacts" are located in the $ P $ region quite symmetrically which indirectly confirms their "artificial" nature. Application of the rotation method allows to restore the inhomogeneity of the body with sufficient accuracy.

To refine the solution, we apply the following adaptive method: in the region of the inhomogeneity found, we define new (and more dense) grid, then carry out additional measurements. As a result, a more accurate value of the refractive index is obtained.

The authors solved a series of problems, analyzed the solutions obtained and compared them with the exact solutions. A description of one of the numerical experiments is presented in this article.

%%%%%%%%%%%%%%%%%%%%%%%%%%%%%     N E W    S E C T I O N    %%%%%%%%%%%%%%%%%%%%%%%%%%%%%%%%%%%%%%%%%%%%%%%%%%%%%%%%
\section{The Direct Scattering Problem}

Prior to the describing the statement of the inverse problem as well as the method for its solving, we shall investigate the direct diffraction problem.

\subsection{The Boundary Value Problem of Diffraction by a Solid with a Piecewise H\"{o}lder Refractive Index}

Consider an isotropic inhomogeneous rectangle parallelepiped
$$P = \{x=(x_1,x_2,x_3): a_1 < x_1 < b_1 ,\;a_2 < x_2 < b_2 ,\;a_3 < x_3 < b_3 \}$$
located in the homogeneous space $\mathbb R^3.$

Define the uniform mesh in the domain $\overline P$ as follows
$$
x_{1,i_1} = a_1 + \frac{b_1 - a_1 }{n}i_1,\;x_{2,i_2} = a_2 + \frac{b_2-a_2 }{n}i_2,\;x_{3,i_3} = a_3 + \frac{b_3-a_3}{n}i_3, \quad (0\le i_k\le n)
$$
and introduce sub-domains $\Pi_{i_1i_2i_3}:$
$$
\Pi_{i_1i_2i_3} = \{x: x_{k,i_k} < x_k < x_{k,k+1}\},\quad 0\le i_k\le n-1.
$$
%Будем нумеровать все элементарные параллелепипеды $P_{i_1 i_2 i_3}\subset P$. Это связанно с тем, что нам неизвестно местоположение, размер и форма неоднородности. Объем любого элементарного параллелепипеда $P_{i_1i_2i_3}$ равен $p$.

We also define a set of piecewise constant functions $\chi_{i_1i_2i_3}$ (indicator functions):
\begin{equation}\label{sip10}
\chi_{i_1i_2i_3}(x) =
\begin{cases}
 1,& x\in  \Pi_{i_1i_2i_3},\\
 0,& x\notin \Pi_{i_1i_2i_3}.
\end{cases}
\end{equation}

Henceforward we assume that inhomogeneity of the domain $P$ is described by a piecewise continuous function $k(x)=n(x)k_0$ such that
\begin{equation}\label{sip20}
k(x) = \left\{ k_{i_1i_2i_3}(x),\quad x\in \Pi_{i_1i_2i_3}\right.,
\end{equation}
where all functions $k_{i_1i_2i_3}(x)$ are H\"{o}lder continuous
$$k_{i_1i_2i_3}\in C^{0,\alpha}( \Pi_{i_1i_2i_3}).$$
Note that at the points of the parallelepipedal faces $\partial \Pi_{i_1i_2i_3}$ the function $k(x)$ can be defined via one-sided limits from either side of the face.

By introducing the multi-indices $I=(i_1i_2i_3)$ one can now define the function $k(x)$ for any point  $x\in P$ by the following equality:
$$k(x)=\sum_Ik_I(x)\chi_I(x).$$

The lossless medium outside the solid is characterized by a given positive wavenumber $k_0>0.$

Define $E_P$ as the union of all edges of the parallelepipeds $\Pi_I$ and give the following notation:
\begin{equation*}
\Pi'_I=\Pi_I\setminus E_P,\quad P'=P\setminus E_P.
\end{equation*}

The incident wave (the source field) as well as the scattered and the total fields are considered to depend on time harmonically:
\begin{equation}\label{sip30}
U_0(x,t)=u_0e^{-i\omega t},\;U_s(x,t)=u_se^{-i\omega t},\; U(x,t)=U_0(x,t)+U_s(x,t),
\end{equation}
Thus it is sufficient to formulate the scattering problem for the scalar complex amplitude  $u(x)$ of the total field.

We consider the incident field of a point source setting
\begin{equation}\label{sip35}
u_0(x)=\frac{e^{ik_0|x-x_0|}}{4\pi |x-x_0|},\quad x_0\notin \overline P.
\end{equation}
The field represents a solution to the Helmholtz equation
$$
(\triangle + k_0^2) u_0(x) = -\delta(x-x_0)
$$
that satisfies Sommerfeld radiation conditions.

The direct scattering problem in the rigorous mathematical statement is to find a solution $u(x)$ to the following boundary value problem:
\begin{equation*}
({\cal P}_1)\quad
\begin{cases}
    (\triangle + k^2_I(x))u(x)= 0,\quad x\in \Pi_I;\quad\quad (\triangle + k_0^2(x))u(x)=-\delta(x-x_0),\quad x\in \mathbb R^3\setminus (\overline P\cup \{x_0\});\\
    \left.[u]\right|_{\partial \Pi_I} = 0, \left.\left[\frac{\partial u}{\partial \mathbf n} \right]\right|_{\partial \Pi'_I} = 0;\\
    u \in H_{loc}^1(\mathbb R^3\setminus\{x_0\});\\
    \frac{\partial u_s}{\partial r} = i k_0 u_s + o\left( {\frac{1}{r}} \right), \; (\Im k_0 = 0);\quad  u_s(r)= O\left(\frac{1}{r^2}\right),\;(\Im k_0 > 0).\\
  \end{cases}
\end{equation*}

\begin{definition}
Any solution to the problem $({\cal P}_1)$ that satisfies the conditions
\begin{equation}\label{sip50}
u\in C^1(\mathbb R^3\setminus \{x_0\})\bigcap\limits_I C^2(\Pi_I) \bigcap C^2(\mathbb R^3\setminus (\overline P\cup \{x_0\})),
\end{equation}
of continuity is a \emph{quasiclassical solution} to the direct scattering problem.
\end{definition}

%%%%%%%%%%%%%%%%%%%%%%%%%%%%%%%%%%%%%%%%%%%%%%%%%%%%%%5%%%%%%%%%%%%%%%%%%%%%%
%%%%%%%%%%%%%%%%%%%%%%%%%%%%%%%%%%%%%%%%%%%%%%%%%%%%%%5%%%%%%%%%%%%%%%%%%%%%%
%%%%%%%%%%%%%%%%%%%%%%%%%%%%%%%%%%%%%%%%%%%%%%%%%%%%%%5%%%%%%%%%%%%%%%%%%%%%%

\subsection{Lippman-Schwinger Integral Equation. Smoothness of Solutions to the Integral Equation.}

Now we are eager to reduce the problem $({\cal P}_1)$ to the Lippman-Schwinger integral equation.

Rewrite the Helmholtz equation in sub-domains $\Pi_I$ as follows
\begin{equation}\label{sip130}
\Delta u(x) + k_0^2 u(x) = (k_0^2 - k_I^2(x))u(x),\;x\in \Pi_I.
\end{equation}
In the bounded region  $\Pi_0=B\setminus\overline P$ one obtains
\begin{equation}\label{sip140}
\Delta u(x) + k_0^2 u(x) = -\delta(x-x_0),\;x\in \Pi_0,
\end{equation}
where $B\supset P$ is a sufficiently large ball centered at zero (denote also $S=\partial B$)  and  $G(x,y) = \frac{\exp(ik_0 |x-y|)}{4\pi |x-y|}$ is the Green function of the Helmholtz equation. Applying the second Green formula one derives
\begin{equation}\label{sip150}
\begin{aligned}
 &\int\limits_{\partial \Pi_I} \left(\frac{\partial u(y)}{\partial \mathbf n}G(x,y) - \frac{\partial G(x,y)}{\partial \mathbf n}u(y) \right)ds_y =  \int\limits_{            \Pi_I} \Bigl(\Delta  u(y)G(x,y)  - \Delta G(x,y)u(y) \Bigr) dy = \\
 &=  \int\limits_{            \Pi_I} \Bigl(-k_I^2(y) u(y)G(x,y)  + k_0^2 G(x,y)u(y) + \delta(x-y)u(y) \Bigr) dy =\\
 &=  u(x) - \int\limits_{   \Pi_I} (k_I^2(y)- k_0^2) G(x,y)u(y) dy,\quad x\in \Pi_I;
 \end{aligned}
\end{equation}
\begin{equation}\label{sip160}
\begin{aligned}
 &\int\limits_{\partial \Pi_J} \left(\frac{\partial u(y)}{\partial \mathbf n}G(x,y) - \frac{\partial G(x,y)}{\partial \mathbf n}u(y) \right)ds_y =  \int\limits_{  \Pi_J} \Bigl(\Delta  u(y)G(x,y)  - \Delta G(x,y)u(y) \Bigr) dy = \\
 &=  \int\limits_{\Pi_J} \Bigl(-k_J^2(y) u(y)G(x,y)  + k_0^2 G(x,y)u(y)  \Bigr) dy =\\
 &=  - \int\limits_{   \Pi_J} (k_J^2(y)- k_0^2) G(x,y)u(y) dy,\quad x\in \Pi_I\;(J\ne I);
 \end{aligned}
\end{equation}
\begin{equation}\label{sip170}
\begin{aligned}
 &\int\limits_{\partial P \cup S} \left(\frac{\partial u(y)}{\partial \mathbf n}G(x,y) - \frac{\partial G(x,y)}{\partial \mathbf n}u(y) \right)ds =  \int\limits_{B\setminus P} \Bigl(\Delta  u(y)G(x,y)  - \Delta G(x,y)u(y) \Bigr) dx = \\
 &=  \int\limits_{B\setminus P} \Bigl(-k_0^2 u(y)G(x,y)  - \delta(y-x_0)G(x,y) + k_0^2 G(x,y)u(y)  \Bigr) dy = -G(x,x_0).
\end{aligned}
\end{equation}

Add equalities  \eqref{sip150}-\eqref{sip170} and take into account the transmission conditions:
\begin{equation}\label{sip180}
\begin{aligned}
 \int\limits_{S} \left(\frac{\partial u(y)}{\partial \mathbf n}G(x,y) - \frac{\partial G(x,y)}{\partial \mathbf n}u(y) \right)ds  =  u(x) &- \sum\limits_J\int\limits_{\Pi_J} (k_J^2(y)- k_0^2) G(x,y)u(y) dy -\\
 & -G(x,x_0),\quad x\in \Pi_I.
\end{aligned}
\end{equation}

Passing in \eqref{sip180} to the limit as the radius of the ball $B$ tends to infinity one gets
\begin{equation}\label{sip190}
  u(x) - \sum\limits_J\int\limits_{\Pi_J} (k_J^2(y)- k_0^2) G(x,y)u(y) dy =  G(x,x_0),\quad x\in \Pi_I.
\end{equation}

The last equation can be rewritten as follows
\begin{equation}\label{sip200}
u(x) - \int\limits_{P}(k^2(y)-k_0^2) G(x,y)u(y)dy = u_0(x),\quad x\in P
\end{equation}
since the definition of the function $k(x).$  Consider also the integral representation  of the total field in the outside of the solid $P:$
\begin{equation}\label{sip210}
u(x) = u_0(x)+ \int\limits_{P} (k^2(y)-k_0^2) G(x,y)u(y)dy,\quad x\in \mathbb R^3\setminus (P\cup \{x_0\}).
\end{equation}

\begin{definition}
The \emph{integral statement} of the direct diffraction problem is understood as the system  $({\cal P}_2)$ consisting of equation \eqref{sip200} in the domain $P$ and representation и \eqref{sip210} outside it.
\end{definition}

The operator in equation \eqref{sip200} is denoted by ${\cal I}-{\cal A}$ and is treated as a mapping in the $L_2(P)$ space.

First, let us show that any solution $u(x)$ of the problem $({\cal P}_2)$ satisfies the smoothness conditions, formulated in the quasiclassical statement of the problem.

\begin{theorem}\label{theor_smooth}
Let equation \eqref{sip200} have a solution $u\in L_2(P).$ Then, the smoothness conditions \eqref{sip50} are satisfied by the total field $u(x),$ extended outside $P$ according to \eqref{sip210}.
\end{theorem}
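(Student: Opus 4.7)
The plan is to combine the interior equation \eqref{sip200} with the exterior representation \eqref{sip210} into the single identity
$$u(x)=u_0(x)+(\mathcal{A}u)(x),\qquad x\in\mathbb{R}^3\setminus\{x_0\},$$
where
$$(\mathcal{A}u)(x)=\int_P (k^2(y)-k_0^2)\,G(x,y)\,u(y)\,dy,$$
and then to bootstrap the smoothness of $u$ by repeatedly feeding improved regularity of the density $f:=(k^2-k_0^2)u$ into known mapping properties of the volume potential $\mathcal{A}$. Since $u_0\in C^\infty(\mathbb{R}^3\setminus\{x_0\})$, all of the work concerns $\mathcal{A}u$.

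First I would obtain the global $C^1$ statement, condition~(i) of \eqref{sip50}. Because $k\in L_\infty(P)$ and $u\in L_2(P)$, we have $f\in L_2(P)$; the distributional identity $(\Delta+k_0^2)\mathcal{A}u=-f$ together with interior elliptic regularity gives $\mathcal{A}u\in H^2_{\mathrm{loc}}(\mathbb{R}^3)$, and Sobolev embedding in three dimensions makes $\mathcal{A}u$ H\"{o}lder continuous with exponent $\alpha<1/2$. In particular $u$ is bounded on $P$, so $f\in L_\infty(P)$, and the classical estimate that a volume potential with bounded density has gradient given by the absolutely convergent weakly singular integral with kernel $O(|x-y|^{-2})$ yields $\mathcal{A}u\in C^1(\mathbb{R}^3)$. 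Hence $u\in C^1(\mathbb{R}^3\setminus\{x_0\})$.

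The exterior condition~(iii) is almost free: for $x\notin\overline{P}$ the kernel $G(x,\cdot)$ and all of its $x$-derivatives are bounded on $\overline{P}$, so differentiation under the integral sign is legal to any order and $\mathcal{A}u\in C^\infty(\mathbb{R}^3\setminus\overline{P})$. Adding $u_0$ gives $u\in C^\infty(\mathbb{R}^3\setminus(\overline{P}\cup\{x_0\}))$, considerably more than the required $C^2$.

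The main obstacle is condition~(ii), the interior $C^2$-regularity on each subcell $\Pi_I$. Here I would split $\mathcal{A}u=\sum_J\mathcal{A}_J u$ according to the partition $P=\bigcup_J\overline{\Pi_J}$. For $J\neq I$ the kernel $G(x,y)$ is smooth on $\Pi_I\times\Pi_J$, so $\mathcal{A}_J u\in C^\infty(\Pi_I)$. For $J=I$ the density $(k_I^2-k_0^2)u$ is H\"{o}lder continuous on $\Pi_I$, being the product of the H\"{o}lder $k_I$ and the H\"{o}lder $u$ produced in the previous paragraph. The Helmholtz kernel $G$ differs from the Newtonian kernel $1/(4\pi|x-y|)$ by a globally $C^\infty$ function, so I would invoke the classical theorem that a Newtonian volume potential with H\"{o}lder continuous density is $C^{2,\alpha}$ in the interior of the support of the density; this gives $\mathcal{A}_I u\in C^2(\Pi_I)$, and summing over $J$ together with adding $u_0$ produces $u\in C^2(\Pi_I)$. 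It is precisely here that the piecewise-H\"{o}lder structure of $k$ is used, and the jumps of $f$ across the internal faces $\partial\Pi_I$ explain why the theorem cannot assert $C^2$ globally: the $C^{2,\alpha}$-regularity of the potential is inherently confined to the open cells, which is exactly the compromise recorded in \eqref{sip50}.
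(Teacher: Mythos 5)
Your proposal is correct and follows essentially the same route as the paper: exterior smoothness from the smooth kernel, global $C^1$ via $H^2$-regularity of the potential plus Sobolev embedding to H\"{o}lder continuity of $u$, and interior $C^2(\Pi_I)$ by isolating the self-cell contribution with H\"{o}lder density and invoking the $C^{2,\alpha}$ regularity of the Newtonian volume potential. The only cosmetic difference is that the paper localizes with a cut-off function around each interior point before applying the Colton--Kress potential theorem, whereas you appeal directly to the interior version of that regularity result; both are standard and equivalent here.
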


\begin{proof}

From the definition of the incident wave in the considered statement of the problem it follows that $u_0\in C^{\infty}(\mathbb R^3\setminus \{x_0\}).$

Any solution is infinitely differentiable outside the solid since the smoothness of the integral operator kernel at each  $x\notin\overline P.$

Now consider equation \eqref{sip200}. For each multi-index $I$ one has
\begin{equation}\label{sip220}
u(x)-\int\limits_{\Pi_I} (k^2_I(y)-k_0^2)G(x,y)u(y)dy= \sum\limits_{J\ne I} \int\limits_{\Pi_J} (k^2_J(y)-k_0^2)G(x,y)u(y)dy + u_0(x), \quad x\in \Pi_I.
\end{equation}
The righthand side of the equation is infinitely differentiable in the open domain $\Pi_I$ since, for any $J\ne I,$ one has $G(x,y)\in C^\infty(\Pi_I\times \Pi_J).$

The inclusion $\in L_2(P)$  implies $u\in H^2(P)$ and, consequently, $u\in C^\alpha(\overline P)$ for all $0<\alpha<1/2.$ Then  ${\cal A}u \in C^1(\mathbb R^3\setminus \{x_0\})$ (see.~\cite{b_Vla71}). From the latter follows the inclusion $u\in C^1(\mathbb R^3\setminus \{x_0\})$ which results also in the energy finiteness condition $u\in H^1_{\mathrm{loc}}(\mathbb R^3\setminus\{x_0\}).$

It suffices to prove that $u\in C^2(\Pi_I)$ for any $I.$ Write the following equality for $u$ according to equation \eqref{sip220}:
\begin{equation}\label{sip225}
u(x)=\int\limits_{\Pi_I} (k^2_I(y)-k_0^2)G(x,y)u(y)dy + w(x)= v(x)+w(x), \quad x\in \Pi_I,
\end{equation}
where $w\in C^\infty(\Pi_I).$

Let $x_0\in \Pi_I$ be an arbitrary inner point of the $I$-th sub-domain such that $d=dist(x_0,\partial \Pi_I)>0.$ Introduce the cut-off function  $c\in C^\infty(\Pi_I):$
$$
c(y)=
\begin{cases}
1, & y\in B=B_{d/4}(x_0),\\
0, & y\in B=\Pi_I\setminus B_{3d/4}(x_0).
\end{cases}
$$
Represent $v$ in the form given below:
$$
v(x)=v_1(x)+v_2(x)=\int\limits_{\Pi_I} (k^2_I(y)-k_0^2)G(x,y)u(y) c(y)dy + \int\limits_{\Pi_I\setminus B} (k^2_I(y)-k_0^2)G(x,y)u(y)(1-c(y))dy.
$$
Since the smoothness of the kernel in the second term, one obtains $v_2\in C^\infty (B).$

Note that $u\in C^{0,\alpha}(\Pi_I)$ and $c\in C_0^\infty(\Pi_I),$ то $(k^2_I(y)-k_0^2)G(x,y)u(y) c(y)\in C_0^{0,\alpha}(\mathbb R^3).$ From the latter, using the properties of the volume potential (for details, see \cite{b_ColtonKress_Inv} on page 212), follows the inclusion $v_1\in C^{2,\alpha} (\mathbb R^3).$

Thus, a solution $u(x)$ is twice differentiable in a vicinity of each point  $x_0\in \Pi_I,$ i.e. $u\in C^2(\Pi_I).$

\end{proof}

%%%%%%%%%%%%%%%%%%%%%%%%%%%%%%%%%%%%%%%%%%%%%%%%%%%%%%%%%%%%%%%%%%%%%%%%%%%%%%%%%%%%%%%%%%%%%%%%%%%%%%%%%%%%%%%%%%%%%%%%%%%%%%%%%%%%%%%%%%%%%%%%
%%%%%%%%%%%%%%%%%%%%%%%%%%%%%%%%%%%%%%%%%%%%%%%%%%%%%%%%%%%%%%%%%%%%%%%%%%%%%%%%%%%%%%%%%%%%%%%%%%%%%%%%%%%%%%%%%%%%%%%%%%%%%%%%%%%%%%%%%%%%%%%%
%%%%%%%%%%%%%%%%%%%%%%%%%%%%%%%%%%%%%%%%%%%%%%%%%%%%%%%%%%%%%%%%%%%%%%%%%%%%%%%%%%%%%%%%%%%%%%%%%%%%%%%%%%%%%%%%%%%%%%%%%%%%%%%%%%%%%%%%%%%%%%%%
\subsection{The Equivalency Theorem. Uniqueness of a Solution to the Diffraction Problem}

Let us formulate and prove two important results of investigation of the direct diffraction problem, which are the theorem on equivalency between the differential in the integral formulations of the problem, and the theorem on uniqueness of its quasiclassical solution.

\begin{theorem}\label{theor_equiv}
The problems $({\cal P}_1)$ and $({\cal P}_2)$ are equivalent. More precisely, if $u(x)$ is a quasiclassical solution to the problem $({\cal P}_1)$ then $u$ satisfies equation \eqref{sip200} and representation equality \eqref{sip210}. Vise versa, for any solution $u\in L_2(P)$ to the integral equation \eqref{sip200}, the total field $u(x),$ extended to $\mathbb R^3\setminus \{x_0\}$ by formula \eqref{sip210}, is a quasiclassical solution to the problem $({\cal P}_1)$.
\end{theorem}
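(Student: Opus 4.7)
The plan is to prove the two directions separately. For the direct implication, most of the work has already been carried out in the derivation \eqref{sip150}--\eqref{sip190}; what remains is to justify each of those steps rigorously under the quasiclassical hypothesis. Since a quasiclassical solution satisfies $u\in C^2(\Pi_I)\cap C^1(\mathbb R^3\setminus\{x_0\})$, Green's second identity applies in each open parallelepiped $\Pi_I$ and in the exterior region $\Pi_0=B\setminus\overline P$ after first integrating over slightly shrunken subdomains and passing to the limit using the $C^1$ continuity up to the interfaces. When the equalities \eqref{sip150}--\eqref{sip170} are summed over $I$, the surface integrals on the interior faces $\partial\Pi_I\cap\partial\Pi_J$ cancel in pairs by virtue of the transmission conditions $[u]=0$ and $[\partial u/\partial\mathbf n]=0$, and the contributions on $\partial P$ from the inside and the outside cancel for the same reason. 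The remaining surface integral on $S=\partial B$ is then shown to vanish as the radius tends to infinity by the standard Sommerfeld-radiation argument applied to $u_s=u-u_0$ together with the analogous radiation behaviour of $G(x,y)$; this yields \eqref{sip200} for $x\in P$, and repeating the same derivation with the base point $x$ placed outside $\overline P$ yields \eqref{sip210}.

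For the converse implication, I would begin with $u\in L_2(P)$ satisfying \eqref{sip200} and use \eqref{sip210} to extend it to $\mathbb R^3\setminus\{x_0\}$. Theorem \ref{theor_smooth} already gives the regularity \eqref{sip50}, so the $H^1_{\mathrm{loc}}$ condition, the continuity condition $[u]|_{\partial\Pi_I}=0$, and the jump condition $[\partial u/\partial\mathbf n]|_{\partial\Pi'_I}=0$ on the inner faces follow immediately from $u\in C^1(\mathbb R^3\setminus\{x_0\})$. To recover the differential equations, I would apply $(\Delta+k_0^2)$ termwise to \eqref{sip200} and \eqref{sip210}, using the classical identity $(\Delta+k_0^2)\int_{P}G(x,y)f(y)\,dy=-f(x)$, valid pointwise in each $\Pi_I$ for Hölder densities (this is the same volume-potential result cited in the proof of Theorem \ref{theor_smooth}) and in the distributional sense outside $\overline P$. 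This delivers $(\Delta+k_I^2(x))u=0$ in $\Pi_I$ and $(\Delta+k_0^2)u=-\delta(x-x_0)$ in $\mathbb R^3\setminus(\overline P\cup\{x_0\})$. Finally, since $u_s=u-u_0$ coincides on $\mathbb R^3\setminus\overline P$ with the volume potential of the compactly supported density $(k^2-k_0^2)u$, the Sommerfeld radiation conditions for $u_s$ follow from the known large-$|x|$ asymptotics of such potentials.

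The main obstacle is the face analysis in the direct implication. One has to justify Green's formula on each $\Pi_I$ even though $u$ is only $C^2$ in the open cell and $C^1$ up to the boundary, so the normal derivatives on $\partial\Pi_I$ must be understood as limits from inside; this is handled by the standard exhaustion of $\Pi_I$ by slightly smaller parallelepipeds combined with uniform convergence of $\nabla u$ up to the face. The subsequent bookkeeping of cancellations also requires that both transmission conditions be invoked \emph{together} on interior faces, and that the integral over $\partial P$ be paired between the interior and exterior contributions. The vanishing of the integral over $S$, although classical, still requires careful use of the radiation condition; the remaining steps (pointwise application of $(\Delta+k_0^2)$ to the volume potential, verification of the Sommerfeld conditions for the scattered field) are standard once the regularity provided by Theorem \ref{theor_smooth} is in hand.
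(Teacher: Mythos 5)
Your proposal is correct and follows essentially the same route as the paper: the forward direction is exactly the Green's-identity derivation \eqref{sip150}--\eqref{sip190} (which the paper simply cites), and the converse uses the regularity from Theorem \ref{theor_smooth} together with the classical identity $(\Delta+k_0^2)\int_P G(x,y)f(y)\,dy=-f(x)$ for H\"older densities and the radiation behaviour of the volume potential. You supply more detail than the paper does (notably the limiting argument for Green's formula on each cell and the explicit cancellation bookkeeping), but the underlying argument is the same.
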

\begin{proof}

The former part of the theorem follows from the derivation of the integral equation.

Let $u$ be a solution to equation \eqref{sip200} with $u_0\in C^{\infty}(\mathbb R^3\setminus \{x_0\})$.

The definition of the term  $u_1$ via a volume potential together with smoothness of the term $u_0$ in $\mathbb R^3\setminus \{x_0\}$ imply that $u$ is a solution to the Helmholtz equation in the domains $\Pi_I$ and $\mathbb R^3\setminus (\{x_0\}\cup\overline P).$

The scattered field $u_s(x)=\int\limits_P (k^2(y)-k_0^2)G(x,y)u(y)dy$ satisfies the radiation condition, whereas the transmission conditions are fulfilled since the inclusion $u\in C^1(\mathbb R^3\setminus \{x_0\})$ shown in Theorem \ref{theor_smooth}. Note that the equality $\frac{\partial u}{\partial \mathbf n}  = 0$ can not be written on the edges of the sub-domains $\Pi_I.$
\end{proof}

%Докажем теперь единственность решения краевой задачи $({\cal P}_1).$

\begin{theorem}\label{theor_uniqBV}
For any $\Im k(x)\ge 0$ the problem $({\cal P}_1)$ has at most one quasiclassical solution.
\end{theorem}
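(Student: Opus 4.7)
The plan is to argue by contradiction. Let $u_1,u_2$ be two quasiclassical solutions of $({\cal P}_1)$ and set $u:=u_1-u_2$. Then $u$ solves the homogeneous version of the problem: $(\triangle+k_I^2(x))u=0$ in each $\Pi_I$, $(\triangle+k_0^2)u=0$ on $\mathbb R^3\setminus\overline P$ (the $\delta$-sources cancel), together with the transmission conditions on every $\partial\Pi_I'$, the smoothness \eqref{sip50}, and the radiation condition. The goal is to show $u\equiv 0$.

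The first step is an energy identity. Apply Green's second identity to $u$ and $\bar u$ separately on each $\Pi_I$ and on $B_R\setminus\overline P$, where $B_R$ is a large ball. Because $[u]=0$ on every interface and $[\partial u/\partial\mathbf n]=0$ on every face $\partial\Pi_I'$ (while the edge set $E_P$ has zero surface measure), the interior surface contributions cancel pairwise when the pieces are summed, leaving
\begin{equation*}
\int\limits_{\partial B_R}\!\Big(\bar u\,\tfrac{\partial u}{\partial\mathbf n}-u\,\tfrac{\partial\bar u}{\partial\mathbf n}\Big)\,ds = -\sum_I\int\limits_{\Pi_I}(k_I^2-\overline{k_I^2})|u|^2\,dx - \int\limits_{B_R\setminus\overline P}(k_0^2-\overline{k_0^2})|u|^2\,dx.
\end{equation*}
Dividing by $2i$ and using $\Im(k^2)=2(\Re k)(\Im k)\ge 0$, which holds under the hypothesis $\Im k(x)\ge 0$ (and $\Re k>0$), one obtains
\begin{equation*}
\Im\int\limits_{\partial B_R}\bar u\,\tfrac{\partial u}{\partial\mathbf n}\,ds\le 0.
\end{equation*}

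Combining this inequality with the expansion of $\int_{\partial B_R}|\partial_r u-ik_0 u|^2\,ds$, which is $o(1)$ by the Sommerfeld condition when $\Im k_0=0$ (in the absorptive case $\Im k_0>0$ the decay $u=O(r^{-2})$ gives the conclusion immediately), yields $\int_{\partial B_R}|u|^2\,ds\to 0$ as $R\to\infty$. Rellich's lemma together with unique continuation in the connected exterior $\mathbb R^3\setminus\overline P$, where $u$ satisfies the constant-coefficient Helmholtz equation, implies $u\equiv 0$ there.

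It remains to transport the zero inside $P$. The transmission conditions now give $u=0$ and $\partial u/\partial\mathbf n=0$ on $\partial P$ from the interior. For each $\Pi_I$ sharing a face with $\partial P$, $u$ solves $\triangle u+k_I^2 u=0$ with Hölder coefficient $k_I$ and has vanishing Cauchy data on an open portion of $\partial\Pi_I$; Aronszajn's unique continuation theorem then forces $u\equiv 0$ on $\Pi_I$. Iterating across the finitely many interior faces—each step supplying vanishing Cauchy data on the shared face through the transmission conditions—one exhausts $P$. The main obstacle is precisely this last step: it relies on (i) enough regularity of the coefficients for a unique continuation theorem, which is provided by the piecewise Hölder assumption, and (ii) the fact that every $\Pi_I$ can be reached from a boundary-adjacent cell by a chain of face-neighbors, which is automatic for the rectangular grid under consideration. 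The first two steps are routine once one is careful with the partition and the measure-zero edge set $E_P$.
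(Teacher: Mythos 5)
Your proposal is correct and follows essentially the same route as the paper: a Green/energy identity over the cells $\Pi_I$ and the exterior annulus, cancellation of interface terms via the transmission conditions, the radiation condition plus Rellich's lemma to kill $u$ outside $\overline P$, and then unique continuation cell by cell into $P$. The only (minor) difference is in the last step: you invoke the Cauchy-data (Aronszajn) form of unique continuation across each face, whereas the paper first uses the volume-potential representation of $u$ to establish $C^2$ regularity in a neighborhood straddling $\partial P$ and then applies the interior unique continuation principle to the function vanishing on the outer side; both are legitimate.
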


\begin{proof}

Let us show that the corresponding homogeneous boundary value problem (with $u_0\equiv 0$ in $\mathbb R^3$) formulated for the scattered field $u_s\equiv u$ has only the trivial solution $u_s\equiv 0.$

1. Consider a sufficiently large ball $B \supset \overline P$ of a radius $R.$ Introduce regions  $\Pi_0:=B\setminus(\overline P)$ with the boundary $\partial \Pi_0 = \partial B \cup \partial P,$ and $\Pi_{-1}:={\overline B}^c.$

We shall reduce the original problem for the scattered field $u_s$ to a transmission problem in the domains $\Pi_i.$ For this purpose we denote the restriction of $u_s(x)$ to the closed subsets $\overline \Pi_I$ by $v_I(x).$ The functions $v_I$ satisfy the Helmholtz equation in the corresponding domains:
\begin{equation}\label{sip70}
\begin{array}{ll}
 (\Delta + k_e^2 )v_I(x) = 0,& x\notin P \; (I = -1,0),\\
 (\Delta + k^2_I(x))v_I(x) = 0,& x\in \Pi_I,
 \end{array}
\end{equation}
as well as the transmission conditions at the boundaries of the adjacent domains $\Pi_I, \Pi_J$:
\begin{equation}\label{sip80}
 v_I(x)=v_J(x),\;
 -\frac{\partial v_I(x)}{\partial \mathbf n}= \frac{\partial v_J(x)}{\partial \mathbf n},\quad x \in\partial \Pi'_I.
\end{equation}
The radiation condition is now formulated for the function $v_{-1}:$
\begin{equation}\label{sip90}
 \frac{\partial v_{-1}}{\partial r} = i k_0 v_{-1} + o\left( {\frac{1}{r}} \right), \; (\Im k_0 = 0);\quad  v_{-1}(r)= O\left(\frac{1}{r^2}\right),\;(\Im k_0 > 0).
  \end{equation}

The first Green formula, applied to the functions $\overline v_I, v_I$ in the bounded domains  $\Pi_0$  and $\Pi_I\subset P,$  as well as the Helmholtz equation yield in the following relation:
\begin{equation}\label{sip100}
\begin{aligned}
\int\limits_{\Pi_I}\bigl(\overline v_I\triangle v_I+|\nabla v_I|^2\bigr)dx &=-\int\limits_{\Pi_I} k^2_I|v_1|^2 dx+\\
&+\int\limits_{\Pi_I} |\nabla v_I|^2 dx = \int\limits_{\partial \Pi_I} \overline v_I \frac{\partial v_I}{\partial\mathbf n} ds,\\
\int\limits_{\Pi_0} \bigl(\overline v_0\triangle v_0 +|\nabla v_0|^2\bigr)dx&=-k^2_0\int\limits_{\Pi_0} |v_0|^2dx +\\
&+\int\limits_{\Pi_0} |\nabla v_0|^2 dx = \int\limits_{\partial \Pi_0} \overline v_0 \frac{\partial v_0}{\partial\mathbf n} ds,
\end{aligned}
\end{equation}

Add equalities \eqref{sip100}, using the transmission conditions \eqref{sip80}:
\begin{equation}\label{sip110}
\begin{aligned}
\int\limits_{\partial B } \overline v_0 v_{0,\mathbf n} ds=
& -\sum\limits_I\int\limits_{\Pi_I} k^2_I |v_1|^2 dx-k^2_0\int\limits_{\Pi_0} |v_0|^2 dx+ \\
&+\int\limits_{V_0} |\nabla v_0|^2 dx + \sum\limits_{I} \int\limits_{\Pi_I} |\nabla v_I|^2 dx
 = - \int\limits_{\partial B  } \overline v_{-1} \frac{\partial v_{-1}}{\partial\mathbf n} ds.
\end{aligned}
\end{equation}

Consider the imaginary part of the latter relation and take into account the radiation conditions:
$$
\begin{aligned}
\Im \left(\int\limits_{\partial B} \overline v_{-1} \frac{\partial v_{-1}}{\partial\mathbf n} ds\right)&=
\Im \left(\ \int\limits_{\partial B} (i k_0 v_{-1} + o(R^{-1}))\overline v_{-1} ds\right)=\\
    &=k_0\int\limits_{\partial B} |v_{-1}|^2 ds +
         \int\limits_{\partial B}  o(R^{-2})ds=\\
    &=k_0\int\limits_{\partial B} |v_{-1}|^2 ds +o(1)=0.
\end{aligned}
$$
Application of the Rellich lemma (see \cite{b_ColKre87ru} on p.88) results in equality $v_{-1}(x)\equiv 0$ for each point  $x\in\Pi_{-1}.$

3. Show now that the relation $v_{-1}(x)\equiv 0$ holds at the points of the inhomogeneity domain.

Consider an arbitrary <<external>> parallelepiped $\Pi_I$ such that $\partial \Pi_I\cap\partial P = S \ne\varnothing.$ Represent the solution $u$ as follows (see also the previous subsection):
\begin{equation}\label{sip111}
\begin{aligned}
u(x) & =\int\limits_P G(x,y)(k^2(y)-k_0)u(y)dy =\\
     &=\sum\limits_{J\ne I}\int\limits_{\Pi_J} G(x,y)(k^2_J(y)-k_0)u(y)dy + \int\limits_{\Pi_I} G(x,y)(k^2_I(y)-k_0)u(y)dy=\\
     &=v(x)+w(x),\quad x\in P.
\end{aligned}
\end{equation}

It can be shown that the function $u(x)$ is infinitely differentiable in a sufficiently small vicinity $U$ of an arbitrary point $x_0\in S.$ Consider the following representation of $w(x):$
\begin{equation}\label{sip112}
\begin{aligned}
w(x)  &=\int\limits_{\Pi_I} G(x,y)(k^2(y)-k_0)u(y) c(y)dy +\\
      &+\int\limits_{\Pi_I\setminus U'} G(x,y)(k^2(y)-k_0)u(y) (1-c(y))dy = w_1(x) + w_2(x),
\end{aligned}
\end{equation}
where $c(y)\in C^{0,\alpha}_0(U)$ is a compactly supported cut-off function such that $c(y)\equiv 1$ in $B_r(x_0)=U'\subset U.$ Then the smoothness of the integral operator's kernel implies $w_2\in C^\infty (U').$ Further,  the inclusion $w_1\in C^2(\mathbb R^3)$  is valid since the term $w_1$ is a Newtonian potential with the compactly supported smooth density $(k^2(y)-k_0)u(y) c(y)\in C_0^{0,\alpha}(\mathbb R^3)$ (see \cite{b_ColtonKress_Inv} on p.207).

Thus, the function $u\in C^2(U')$ is a solution to the Helmholtz equation such that $u\equiv 0$ in the sub-domain $U'\setminus P.$ From the unique continuity principle (\cite{b_ColtonKress_Inv}, p.212) it follows now that $u\equiv 0$ in the domain $U'$ and, consequently in the entire parallelepiped $\Pi_I.$

Similarly, one can consider all sub-domains $\Pi_I,$ repeat the above arguments, and conclude that $u\equiv 0$ in $P.$

If the condition $\Im k>0$ holds in the entire space, then, from the second relation in \eqref{sip90}  it follows that $u(x)=O(R^{-2})$ an the sphere $\partial B.$ Consequently, the left-hand  side of equality \eqref{sip110} vanishes as $R\to +\infty.$ As a results, one obtains
\begin{equation}\label{sip120}
\begin{aligned}
&-\sum\limits_I\int\limits_{\Pi_I} k^2_I|v_I|^2 dx - k^2_e\int\limits_{\Pi_0} |v_0|^2dx + \\
  &+\sum\limits_{I} \int\limits_{\Pi_I} |\nabla v_I|^2 dx  \to 0,\, R\to +\infty.
\end{aligned}
\end{equation}
For the imaginary part of \eqref{sip120} holds the following relation:
$$
\sum\limits_I \Re k_I\cdot\Im k_I\int\limits_{\Pi_I} |v_I|^2 dx + \Re k_0\cdot\Im k_0 \int\limits_{\Pi_0} |v_0|^2 dx \to 0
$$
as $R\to +\infty.$ Both terms in the latter expression are of the same sign due to the properties of the medium. From that we conclude that $v_I(x)\equiv 0,\;x\in \Pi_I.$

If $k_0>0$ outside the inhomogeneity domain and $\;\Re k\!\cdot\!\Im k(x)>0$ inside it, then we similarly deduce that $u\equiv 0$ in $P:$ using the Rellich lemma we first get $v_{-1}\equiv 0,$ and then, as in the item 3. of the current proof, we obtain that the solution is trivial inside the solid $P.$
\end{proof}

The next statement on results from Theorems \ref{theor_equiv} and \ref{theor_uniqBV}:
\begin{theorem}
The operator
$$
({\cal I}-{\cal A}):L_2(P)\to L_2(P)
$$
is continuously invertible
\end{theorem}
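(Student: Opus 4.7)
The plan is to apply Fredholm theory: show that $\mathcal{A}$ is compact on $L_2(P)$, so that $\mathcal{I}-\mathcal{A}$ is Fredholm of index zero, and then argue that its kernel is trivial using the uniqueness result for the quasiclassical problem together with the equivalence theorem.

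First I would verify that $\mathcal{A}$ is compact. The kernel is $K(x,y)=(k^2(y)-k_0^2)G(x,y)$ with $|G(x,y)|\le C/|x-y|$. Since $k$ is bounded on $P$ and, in $\mathbb{R}^3$, the function $|x-y|^{-2}$ is integrable on $P\times P$ (the integral $\int_P|x-y|^{-2}\,dy$ is uniformly bounded in $x\in P$), one gets $K\in L_2(P\times P)$. Hence $\mathcal{A}:L_2(P)\to L_2(P)$ is a Hilbert--Schmidt operator and in particular compact. By the Fredholm alternative, $\mathcal{I}-\mathcal{A}$ has index zero, and continuous invertibility is equivalent to injectivity.

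Next I would prove injectivity. Suppose $u\in L_2(P)$ satisfies $(\mathcal{I}-\mathcal{A})u=0$, i.e., equation \eqref{sip200} with $u_0\equiv 0$. Extend $u$ to $\mathbb{R}^3\setminus\{x_0\}$ by formula \eqref{sip210} (which, with $u_0\equiv 0$, is simply the volume potential in the right-hand side). By Theorem \ref{theor_smooth}, this extension satisfies the smoothness conditions \eqref{sip50} of the quasiclassical statement. By Theorem \ref{theor_equiv}, $u$ is then a quasiclassical solution of the homogeneous problem $(\mathcal{P}_1)$, i.e., with incident field identically zero. Theorem \ref{theor_uniqBV} guarantees uniqueness of quasiclassical solutions whenever $\Im k\ge 0$, so $u\equiv 0$ in $\mathbb{R}^3\setminus\{x_0\}$, hence in particular in $L_2(P)$. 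Thus $\ker(\mathcal{I}-\mathcal{A})=\{0\}$.

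Combining the two steps: $\mathcal{I}-\mathcal{A}$ is Fredholm of index zero and injective on $L_2(P)$, therefore it is bijective, and by the bounded inverse theorem its inverse is continuous. The only slightly delicate point is the first step, the Hilbert--Schmidt estimate for the weakly singular kernel; everything else is a direct application of the previously established theorems \ref{theor_smooth}, \ref{theor_equiv}, and \ref{theor_uniqBV}.
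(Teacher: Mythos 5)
Your proposal is correct and follows essentially the same route as the paper: compactness of $\mathcal{A}$, Fredholm alternative, and injectivity deduced from Theorems \ref{theor_smooth}, \ref{theor_equiv}, and \ref{theor_uniqBV} applied to the homogeneous problem with $u_0\equiv 0$. The only (immaterial) difference is in the compactness step, where you use the Hilbert--Schmidt property of the weakly singular kernel, while the paper invokes the smoothing property $\mathcal{A}u\in H^2(P)$ together with the compact embedding of $H^2(P)$ into $L_2(P)$; both are standard and valid.
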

\begin{proof}

For any $u\in L_2(P)$ one has ${\cal A}u\in H^2(P).$ From the latter it follows that ${\cal A}:L_2(P)\to L_2(P)$ is a compact operator.

Let $u_0\equiv 0$ in $\mathbb R^3.$ Then the boundary value problem $({\cal P}_1)$ has only the trivial solution (see Theorem \ref{theor_uniqBV}). Thus, due to the equivalency between $({\cal P}_1)$ and $({\cal P}_2),$ $u=0$ is the only solution to the integral equation $({\cal I}-{\cal A})u=0.$

Thus, $({\cal I}-{\cal A}):L_2(P)\to L_2(P)$ is an injective Fredholm operator with index zero.
\end{proof}

%%%%%%%%%%%%%%%%%%%%%%%%%%%%%%     N E W    S E C T I O N    %%%%%%%%%%%%%%%%%%
%%%%%%%%%%%%%%%%%%%%%%%%%%%%%%     N E W    S E C T I O N    %%%%%%%%%%%%%%%%%%
%%%%%%%%%%%%%%%%%%%%%%%%%%%%%%     N E W    S E C T I O N    %%%%%%%%%%%%%%%%%%
\section{The Inverse Problem of Reconstructing the Refractive Index}
\subsection{Statement of the Inverse Problem}

Consider in $\mathbb R^3$ an inhomogeneous parallelepiped $P$ characterized by an unknown refractive index $n(x).$
Assume, as in the statement of the direct diffraction problem, that $n(x)$ and $k(x)=n(x)k_0$ are piecewise-H\"{o}lder functions in the domain $P$ with a given mesh and a set of sub-domains $\Pi_I.$

Introduce a bounded domain $D$ such that $\overline D\cap\overline P=\varnothing,$ and assume that in the points $x\in D$ we are given the known values of the total field at a fixed frequency $\omega:$
\begin{equation}\label{sip300}
U(x,t)=U_0(x,t)+U_s(x,t),\; U_s(x,t)=u_se^{-i\omega t}.
\end{equation}

The monochromatic incident wave $U_0(x,t)=u_0(x)e^{-i\omega t}$ is defined according to \eqref{sip35}, and the source of the field is located in an arbitrary point $x_0\notin \overline P\cup\overline D.$

In the proposed statement of the inverse diffraction problem we use the system $({\cal P}_2)$ of integral equalities which represent the relation between the total field $u(x)$ and the function $k(x)$ (it is shown above that formulations $({\cal P}_1)$ and $({\cal P}_2)$ are equivalent).

We are eager to reconstruct the function $k(x)$ in  the parallelepiped $P$ using measurements of the total field $u(x)$ at points of the bounded domain  $D:$
\begin{equation}\label{sip303}
\int\limits_{P} (k^2(y)-k_0^2) G(x,y)u(y)dy = u(x) - u_0(x) ,\quad x\in D
\end{equation}
and taking into account the equation
\begin{equation}\label{sip306}
u(x) - \int\limits_{P}(k^2(y)-k_0^2) G(x,y)u(y)dy = u_0(x),\quad x\in P
\end{equation}
in the inhomogeneity domain $P.$
%%%%%%%%%%%%%%%%%%%%%%%%%%%%%%%%%%%%%%%%%%%%%%%%%%%%%%%%%%%%%%%%%%%%%%%%%%%%%%%%%%%%%%%%%%%%%%%%%%%%%%%%%%%%%%%%%%%%%%%%%%%%%%%%%%%%%%%%%%%%%%%%%%%%%%%%%%%
\subsection{The Two-step Method for Solving the Inverse Diffraction Problem}

In the domain $P,$ we introduce the function
$$J(x)=(k^2(y)-k_0^2)u(x),$$
assuming that the condition $|k(x)|\ge \tilde k>k_0$ holds everywhere in $P.$
From the representation of the total field in the outside of the solid $P$ follows the equation
\begin{equation}\label{sip310}
\int\limits_{P} G(x,y) J(y)dy= u(x)-u_0(x)=u_s(x), \quad x\in D,
\end{equation}
for determination of $J(x),$ where as equation \eqref{sip200} can be rewritten as below:
\begin{equation}\label{sip320}
\frac{J(x)}{k^2(x)-k_0^2} -\int\limits_{P} G(x,y) J(y)dy= u_0(x), \quad x\in P.
\end{equation}

The idea of the proposed two-step method for reconstruction of the unknown coefficient $n(x)$  is as follows:
\begin{itemize}
\item Given the known values of the incident wave $u_0(x)$  and the total field $u(u)$ in the domain $D,$ we find, in the domain $P,$ the solution $J$ to equation  \eqref{sip310}.
\item We reconstruct the function $k(x)$ inside $P$ using relation  \eqref{sip320}.
\end{itemize}

%%%%%%%%%%%%%%%%%%%%%%%%%%%%%%%%%%%%%%%%%%%%%%%%%%%%%%%%%%%%%%%%%%%%%%%%%%%%%%%%%%%%%%%%%%%%%%%%%%%%%%%%%%%%
\subsection{On Non-uniqueness of a Solution to the Integral Equation}
Let us show that the homogeneous integral equation \eqref{sip310} has non-trivial solutions for any $k_0.$ For example we give below an argumentation for the case of a cubic solid $P=[-1;1]^3\subset R^3.$

Consider the function $\psi(x)=(1-x_1^2)^2(1-x_2^2)^2(1-x_2^2)^2$ and introduce $J(x)=-(\triangle + k_0^2)\psi.$ As  $\psi$ satisfies the homogeneous boundary conditions $\psi\bigr|_{\partial P}=\frac{\partial \psi}{\partial\mathbf n}\bigr|_{\partial P'}=0,$ then the representation
$$
\psi(x)=\int\limits_P G(x,y)J(y)dy
$$
can be given. Introduce the potential $v(x)=\int\limits_P G(x,y) J(y)dy,\quad x\in \mathbb R^3.$ Then at any point $x\in P$ one obtains $v(x)=\psi(x).$ However, the relation $v\equiv 0$ holds outside the closed cube $\overline P:$
$$
\begin{aligned}
0&=\int\limits_{\partial P} \Bigl(\psi(y)\frac{\partial G(x,y)}{\partial\mathbf n} - G(x,y)\frac{\partial \psi(y)}{\partial\mathbf n} \Bigr) ds_y=
   \int\limits_{P} \Bigl(\psi(y) \triangle_y G(x,y)- G(x,y)\triangle_y \psi(y) \Bigr)dy=\\
 &=\int\limits_{P} \Bigl(-k_0^2\psi(y) G(x,y)- G(x,y)\triangle_y \psi(y) \Bigr)dy = \int\limits_{P} G(x,y) J(y) dy=v(x),\quad x\notin\overline P.
\end{aligned}
$$
The similar result can be obtained in the case of a domain  $P$ of an arbitrary shape. For that purpose, define $\psi$ as an arbitrary smooth function with a compact support in $P$ so as to satisfy the conditions $\psi\bigr|_{\partial P}=\frac{\partial \psi}{\partial\mathbf n}\bigr|_{\partial P'}=0$ and repeat the above analysis.

%%%%%%%%%%%%%%%%%%%%%%%%%%%%%%%%%%%%%%%%%%%%%%%%%%%%%%%%%%%%%%%%%%%%%%%%%%%%%%%%%%%%%%%%%%%%%%%%%%%%%%%%%%%%

\subsection{On Uniqueness of a Piecewise-constant Solution $J(x)$ }

It is shown above that the integral equation of the first kind has an infinite set of smooth solution.

We prove below that a unique solution  $J$ can be obtained in the class of piecewise constant functions. Note that for approximate solving equation \eqref{sip310} we are going to apply the collocation method. That is why the choice of piecewise constant functions $J$ is a reasonable one. Thus, we find  $J$ in the following way:
\begin{equation}\label{sip250}
J(x)=\sum\limits_I J_I\chi_I(x),
\end{equation}
where $J_I\in\mathbb C$  are the unknown coefficients, and  $\chi_I(x)$ are the indicator functions of the sub-domains  $\Pi_I$ (at points $x\in\Pi_I$ the function $J(x)$ can be defined by any constant value). Note that the class of piecewise constant solutions is sufficient for solving applied problems of physics, medical tomography etc. (see the introduction of the paper).

Below, we formulate and prove the theorem on uniqueness of a piecewise constant solution $J(x)$ to equation \eqref{sip320}.

%%%%%%%%%%%%%%
\begin{theorem}\label{theor300}\label{theor_uniqIE}
 Consider a fixed set of $n^3$ rectangular sub-domains $\Pi_I$ in the inhomogeneity domain $P.$ Let
  \begin{equation}\label{sip330_0}
k_0 > \frac{\pi^2n^3}{2l},\quad l=\min\limits_i|b_i-a_i|.
  \end{equation}
If equation
  \begin{equation}\label{sip330}
  \int\limits_P G(x,y) J(y)dy = u_s(x),\quad x\in D,\;\overline D\cap\overline P=\varnothing,\; u_s\in C^\infty(\overline D)
  \end{equation}
has a piecewise constant solution $J(x),$ then this solution is the unique one.
\end{theorem}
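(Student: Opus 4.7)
The plan is to assume $J = \sum_I J_I \chi_I$ is a piecewise-constant function satisfying the homogeneous version of \eqref{sip330}, i.e.\ $v(x) := \int_P G(x,y) J(y)\, dy = 0$ for every $x \in D$, and to deduce $J \equiv 0$ by turning this into an overdetermined boundary-value problem for $v$.

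First I would show that $v \equiv 0$ throughout the open exterior $\mathbb{R}^3 \setminus \overline{P}$. There $v$ is real-analytic (the kernel $G(x,y)$ is real-analytic in $x$ away from $\overline{P}$), satisfies the homogeneous Helmholtz equation and the Sommerfeld radiation condition, and vanishes on the open set $D$; hence unique continuation for constant-coefficient elliptic operators propagates the zero to the whole connected exterior. Because $v$ is a volume potential with $L^\infty$ density we have $v \in C^1(\mathbb{R}^3)$ (as in the proof of Theorem~\ref{theor_smooth}), so the inside traces must match the zero exterior traces: $v|_{\partial P} = 0$ and $\partial v/\partial n|_{\partial P} = 0$. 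Inside each $\Pi_I$ the equation $(\Delta + k_0^2) v = -J_I$ holds with a constant right-hand side; globally, extending $v$ by zero yields $v \in H^2(\mathbb{R}^3)$, compactly supported in $\overline{P}$, that solves $(\Delta + k_0^2) v = -J$ with the overdetermined Cauchy data $v = \partial_n v = 0$ on $\partial P$.

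Next I would expand both $v$ and $J$ in the Dirichlet sine basis $\phi_{\mathbf m}(x) = \prod_k \sin\bigl(\pi m_k (x_k - a_k)/(b_k - a_k)\bigr)$ of the box $P$, with eigenvalues $\lambda_{\mathbf m} = \pi^2 \sum_k m_k^2 /(b_k - a_k)^2$; the interior equation gives $v_{\mathbf m} = J_{\mathbf m}/(\lambda_{\mathbf m} - k_0^2)$. The vanishing of $\partial v/\partial n$ on each of the six faces then yields, by separation of variables, a countable family of linear constraints of the form $\sum_{m_1\ge 1} m_1 v_{\mathbf m} = 0$ and $\sum_{m_1 \ge 1} (-1)^{m_1} m_1 v_{\mathbf m} = 0$ for every $(m_2, m_3)$, together with two cyclic analogues in the other coordinates. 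Direct computation of the sine coefficients of a piecewise-constant $J$ on the $n\times n \times n$ grid gives $J_{\mathbf m} = C\,\prod_k \frac{\sin(\pi m_k/(2n))}{m_k}\, \tilde J_{\mathbf m}$, where the auxiliary factor $\tilde J_{\mathbf m}$ is $2n$-antiperiodic and invariant under $m_k \mapsto 2n - m_k$ in each coordinate; consequently $J_{\mathbf m}$ vanishes whenever some $m_k$ is a nonzero multiple of $2n$, and the infinite family $\{J_{\mathbf m}\}$ is parameterised by just the $n^3$ unknowns $\{J_I\}$. Using this aliasing the infinite constraint system collapses, after relabelling $m_k = r_k + 2 j_k n$, into a finite linear system whose coefficients are spectral series $T_r(k_0; m_2, m_3) = \sum_{j \ge 0}(\lambda_{(r+2jn, m_2, m_3)} - k_0^2)^{-1}$ multiplied by elementary trigonometric factors.

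The main obstacle is to show that this reduced system has trivial kernel under the explicit threshold $k_0 > \pi^2 n^3/(2l)$. The plan is to use the lower bound on $k_0$ to control the spectral series $T_r$: the hypothesis is calibrated so that $k_0^2$ dominates all Dirichlet eigenvalues $\lambda_{\mathbf m}$ appearing in the finite-dimensional reduction, which gives the $T_r$ a definite sign and a quantitative lower bound, and thereby guarantees that the coefficient matrix of the reduced system is non-singular on every transverse slice $(m_2, m_3)$. Once invertibility is established, all the coefficients $J_I$ must vanish, which proves uniqueness of the piecewise-constant solution.
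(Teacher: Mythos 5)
Your first step coincides with the paper's: unique continuation in the exterior gives $v\equiv 0$ off $\overline P$, and $C^1$-regularity of the volume potential forces the homogeneous Cauchy data $v=\partial v/\partial\mathbf n=0$ on $\partial P$. After that you diverge, and the divergence is where the argument breaks. The decisive claim in your plan --- that the reduced linear system obtained from the sine expansion is nonsingular because ``$k_0^2$ dominates all Dirichlet eigenvalues appearing in the finite-dimensional reduction,'' so that the spectral sums $T_r=\sum_{j\ge 0}\bigl(\lambda_{(r+2jn,m_2,m_3)}-k_0^2\bigr)^{-1}$ acquire a definite sign --- is not true as stated: each $T_r$ is an infinite series over $j$, the eigenvalues $\lambda_{(r+2jn,m_2,m_3)}$ tend to infinity, so infinitely many summands have $\lambda_{\mathbf m}>k_0^2$ and the terms change sign no matter how large $k_0$ is. A large $k_0$ in fact pushes you deeper into the resonant regime (more eigenvalues below $k_0^2$), and you would also have to handle the degenerate case $k_0^2=\lambda_{\mathbf m}$, where the division defining $v_{\mathbf m}$ fails. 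Nothing in your sketch converts the hypothesis $k_0>\pi^2n^3/(2l)$ into invertibility of your reduced matrices, and it is doubtful that it can in this form, because that hypothesis is a statement about the grid geometry (an upper bound on $\sum_{I\ne I'}|r_{II'}|^{-1}$), not about the spectrum of the Dirichlet Laplacian on the box.

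The paper closes the argument quite differently, and it is worth seeing why its route succeeds. From the Cauchy data on $\partial P$ it applies Green's formula with the conjugate kernel $\bar G(x,y)=e^{-ik_0|x-y|}/(4\pi|x-y|)$ to show that $v$ is also represented by the potential with kernel $\bar G$; subtracting the two representations, the potential $w$ with the real kernel $G_0(r)=\sin(k_0 r)/(4\pi r)$ and density $J$ vanishes identically on $\mathbb R^3$. Since ${\cal F}G_0$ is a surface delta concentrated on the sphere $|\xi|=k_0$, the relation ${\cal F}w\equiv 0$ collapses to $\sum_I J_I e^{i r_I\cdot\xi}=0$ on $S_{k_0}$, i.e.\ to the linear independence of finitely many exponentials restricted to that sphere. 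This is proved by showing that the Gram matrix with entries $\Gamma_{II'}=4\pi k_0\sin(k_0|r_{II'}|)/|r_{II'}|$ off the diagonal and $4\pi k_0^2$ on it is strictly diagonally dominant, and it is exactly here that the condition $k_0>\pi^2 n^3/(2l)$ enters, as a bound on the off-diagonal row sums. If you want to salvage your sine-series route, you must replace the sign heuristic for $T_r$ by an honest invertibility proof of the aliased system; in effect that amounts to re-deriving the same linear-independence statement in a far less convenient basis.
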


%%%%%%%%%%%%%
\begin{proof}
1. Consider the homogeneous equation
\begin{equation}\label{sip340}
  \int\limits_P G(x,y) J(y)dy = 0,\quad x\in D.
  \end{equation}

Introduce the volume potential
\begin{equation}\label{sip350}
v(x)=\int\limits_P G(x,y) J(y)dy,\quad x\in \mathbb R^3.
\end{equation}

Since $J(x)$ is a piecewise constant function then $v(x)\in C^1(\mathbb R^3).$ As a result, the transmission conditions
\begin{equation}\label{sip360}
v|_{\partial \Pi_I}=\frac{\partial v}{\partial\mathbf n}\bigr|_{\partial \Pi_I'}=0
\end{equation}
hold on the boundaries of the parallelepipeds $\Pi_I.$

Moreover, the inclusions $v\in C^2(\Pi_I)$ are valid. Consequently, the Helmholtz equation
\begin{equation}\label{sip370}
(\triangle + k_0^2)v(x)= -J_I,\quad x\in \Pi_I
\end{equation}
holds in the inner points  $x\in\Pi_I$ (in the classical sense).

Outside $\overline P,$ we obtain $(\triangle + k_0^2)v(x)= 0$ and $v\in C^\infty (\mathbb R^3\setminus\overline P).$

By the assumption of the theorem, the function $v$ is equal to zero in the domain $D\subset \mathbb R^3\setminus\overline P.$ Then, applying the unique continuation principle (\cite{b_ColtonKress_Inv}, p.212), we derive that о $v\equiv 0$ everywhere in $\mathbb R^3\setminus\overline P.$

From the inclusion $v(x)\in C^1(\mathbb R^3)$ follows the relation
\begin{equation}\label{sip380}
v|_{\partial P}=\frac{\partial v}{\partial\mathbf n}\bigr|_{\partial P'}=0.
\end{equation}

\vspace{0.5em}

2.
Introduce the fundamental solution $\bar G(x,y)=\frac{e^{-ik_0|x-y|}}{4\pi |x-y|}$ to the Helmholtz equation. Apply the second Green formula to the functions $v, \bar G$ in the domains $\Pi_I,$ take into account the homogeneous boundary conditions \eqref{sip380} and the transmission conditions on $\partial \Pi_I$ for the function $v:$
\begin{equation}\label{sip390}
\begin{aligned}
0&=\int\limits_{\partial P} \Bigl(  v(y)\frac{\partial}{\partial\mathbf n}\bar G(x,y)  - \bar G(x,y) \frac{\partial}{\partial\mathbf n}v(y)\Bigr) ds_y =\\
 &=\sum\limits_I \int\limits_{\partial \Pi_I}\Bigl(v(y)\frac{\partial}{\partial\mathbf n}\bar G(x,y)-\bar G(x,y) \frac{\partial}{\partial\mathbf n}v(y)\Bigr)ds_y=\\
 &=\sum\limits_I \int\limits_{\Pi_I}\Bigl(v(y)\triangle_y\bar G(x,y)-\bar G(x,y) \triangle_y v(y)\Bigr) dy=\\
 &=\sum\limits_{I\ne I_0} \int\limits_{\Pi_I}\Bigl(-k_0^2 v(y)\bar G(x,y)-\bar G(x,y) \triangle_y v(y)\Bigr)dy + \\
 &+\int\limits_{\Pi_{I_0}}\Bigl(-k_0^2 v(y)\bar G(x,y) - \delta(x-y)v(y)-\bar G(x,y) \triangle_y v(y)\Bigr)dy=\\
 &=-v(x) + \sum\limits_I J_I\int\limits_{\Pi_I} \bar G(x,y)dy,\quad x\in \Pi_{I_0}.
\end{aligned}
\end{equation}
From the latter follows the equality
\begin{equation}\label{sip400}
v(x) = \int\limits_{P} \bar G(x,y) J(y)dy,\quad x\in P.
\end{equation}

Subtracting \eqref{sip400} from equation \eqref{sip350} one deduces
$$
w(x) = \int\limits_P \frac{\sin(k_0|x-y|)}{4\pi |x-y|} J(y)dy =\int\limits_P G_0(|x-y|) J(y)dy  \equiv 0,\quad x\in P.
$$

As the potential $v$ so the function $w$ satisfies the transmission conditions on $\partial P$ which implies
\begin{equation}\label{sip410}
w|_{\partial P}=\frac{\partial w}{\partial\mathbf n}\bigr|_{\partial P'}=0.
\end{equation}
In addition, the function $w\in C^\infty(\mathbb R^3\setminus\overline P)$ satisfies outside $\overline P$ the equation $(\triangle + k_0^2)w=0$ and the Sommerfeld radiation conditions. Consequently, $w\equiv 0$ in $\mathbb R^3\setminus\overline P.$

\vspace{1em}
3. Thus, $w\equiv 0$ in $\mathbb R^3.$ Then for the Fourier transform ${\cal F}w(\xi)$ holds the similar relation ${\cal F}w\equiv 0$ everywhere in $\mathbb R^3.$

%Следующие рассуждения будем проводить в предположении, что $$ P = [0,a]\times[0,b]\times[0,c]. $$

Introduce the grid parameters
$$
h_1=(b_1-a_1)/n_1, \quad h_2=(b_2-a_2)/n_2, \quad h_3=(b_3-a_3)/n_3
$$
and the parallelepiped
$$
\Pi_0=[a_1,a_1+h_1]\times[a_2,a_2+h_2]\times[a_3,a_3+h_3].
$$
Then all other finite elements can be defined via shifts of the sub-domain  $\Pi_0$ by appropriate vectors:
\begin{equation}\label{sip430}
\Pi_I=\Pi_{i_1i_2i_3}=\Pi_0 + r_{i_1i_2i_3}, \mbox{ где } r_{i_1i_2i_3}=r_I=(i_1h_1,i_2h_2,i_3h_3).\quad 0\le i_k <n.
\end{equation}

The function $w(x)$ can now be represented as follows
\begin{equation}\label{sip440}
w(x)=\sum\limits_I J_I\int\limits_{\Pi_0+r_I} G_0(|x-y|) dx= \sum\limits_I J_I\int\limits_{\Pi_0} G_0(|x-y-r_I|)dx.
\end{equation}

Evaluate the Fourier transform of $w$ taking in to account the latter relation:
\begin{equation}\label{sip450}
\begin{aligned}
{\cal F}w&= \sum\limits_I J_I   {\cal F}\Bigl( G_0(|x-r_I|)\ast \chi_0(x)\Bigr)=\\
         &=\sum\limits_I J_I   {\cal F}\Bigl( G_0(|x-r_I|)\Bigr) {\cal F}\Bigl(\chi_0(x)\Bigr) =
         {\cal F}\chi_0(\xi) {\cal F}G_0(\xi) \sum\limits_I J_I e^{ir_I\cdot\xi}=\\
         & =    (2\pi)^{-3}\prod\limits_{k=1}^3\frac{e^{-ih_k\xi_k}-1}{\xi_k}\cdot \bigl(\delta(|\xi|-k_0)\bigr)  \cdot\sum\limits_I J_I e^{ir_I\cdot\xi}.
\end{aligned}
\end{equation}

The relation ${\cal F}w\equiv 0$ is reduced to the equality
\begin{equation}\label{sip460}
\sum\limits_I J_I e^{ir_I\cdot\xi} \equiv 0.
\end{equation}
on the centered sphere $S_{k_0}$ of the radius $k_0.$

By $S^{n-1}$ we denote a unit sphere in $\mathbb R^n$ ($n\ge 2$).

\vspace{1em}
4. Let us shown that the functions $e^{ir_I\cdot\xi}$ are linearly independent on the sphere $S_{k_0}.$  To this end, we shall prove that the corresponding Gram matrix $\Gamma$ is nonsingular.

For an arbitrary matrix element $\Gamma_{II'}$ one deduces
\begin{equation}\label{sip470}
\begin{aligned}
\Gamma_{II'}&=\int\limits_{S_{k_0}} e^{ir_I\cdot\xi}e^{-ir_{I'}\cdot\xi} ds_\xi
            =k_0^2\int\limits_{S^2} e^{ik_0(r_I-r_{I'})\cdot\xi} ds_\xi
            =k_0^2\int\limits_{S^2} e^{ik_0 r_{II'}\cdot\xi} ds_\xi =\\
            &=k_0^2\int\limits_{S^2} e^{ik_0 |r_{II'}| \omega_{II'}\cdot\xi} ds_\xi
            =2\pi k_0^2 \int\limits_{-1}^1 e^{ik_0|r_{II'}|t}dt.
\end{aligned}
\end{equation}
In the above evaluation that the integrals  $\int\limits_{S^{n-1}}f(\omega\cdot\xi)ds_\xi$ over the unit centered sphere $S^{n-1}$ do not depend on the variable $\omega\in S^{n-1}$ and, consequently, can be presented  \cite{b_Natterer} as below:
\begin{equation}\label{sip480}
\int\limits_{S^{n-1}}f(\omega\cdot\xi)ds_\xi = |S^{n-2}|\int\limits_{-1}^1 f(t) (1-t^2)^{(n-3)/2}dt.
\end{equation}
From \eqref{sip470} it follows that
\begin{equation}\label{sip490}
\Gamma_{II'}=
\begin{cases}
4\pi k_0 \frac{\sin(k_0|r_{II'}|)}{|r_{II'}|},  &I\ne I',\\
4\pi k_0^2,                             &I =I'.
\end{cases}
\end{equation}

\vspace{1em}
5. Represent $\Gamma$ via sum
$$
\Gamma = 4\pi k_0 (k_0 \tilde I + \tilde\Gamma),
$$
where $\tilde I$ is the unit matrix, and obtain the estimate
$$
\|\tilde \Gamma\|_\infty = \max\limits_{I} \sum\limits_{I'} |\tilde \Gamma_{II'}| \le \frac{\pi^2n^3}{2l}.
$$
The latter implies that the determinant of the Gram matrix is nonzero, since its diagonal element dominate at sufficiently large $n.$

Fix the row index $I'=(0,0,0)$ and define $h=\min\{h_1,h_2,h_3\}$:
\begin{equation}\label{gc473}
\begin{aligned}
\sum\limits_{I\ne I'}|\tilde \Gamma_{II'}|
    &=\sum\limits_{I\ne I'}\frac{1}{|r_{II'}|} = \sum\limits_{(i_1,i_2,i_3)=(0,0,1)}^{(n,n,n)}\frac{1}{\sqrt{(i_1h_1)^2+(i_2h_2)^2+(i_3h_3)^2}}\le\\
    &\le \frac{6}{h}+\frac{1}{h}\sum\limits_{(i_1,i_2,i_3)=(1,1,1)}^{(n,n,n)}\frac{1}{\sqrt{(i_1)^2+(i_2)^2+(i_3)^2}}
     \le \frac{1}{h}\Bigl(6 +2 \iiint\limits_1^{\quad n}\frac{dx}{|x|}\Bigr)\le\\
    &\le \frac{1}{h}\Bigl(6 +0.5 \iiint\limits_{1<|x|<n}\frac{dx}{|x|}\Bigr)
    =  \frac{1}{h}\Bigl(6 + \pi^2 \int\limits_1^n\frac{r^2}{r}dr\Bigr) < \frac{\pi^2}{2} \frac{n^2}{h}\le \frac{\pi^2n^3}{2l}.
\end{aligned}
\end{equation}

The proof is complete.
\end{proof}

Since equation \eqref{sip320} represents the relation between $J,$ $k$ and $u,$ it now follows from theorem \ref{theor_uniqIE} that the solution $k(x)$ to the inverse diffraction problem, corresponding to the piecewise constant $J,$ is also unique.

\begin{remark} [on existence of a solution]
Let us assume that the righthand side of equation \eqref{sip330} is an element of a linear span of functions $\int_{\Pi_I} G(x,y)dy$ (for a given mesh on $p$). Then the operator of the lefthand side of equation \eqref{sip330} can be treated as mapping in finite-dimensional spaces. Such mapping are continuously invertible which results from theorem \ref{theor_uniqIE}. In addition, the solutions to the inverse problem depend continuously on the given data.
\end{remark}

%%%%%%%%%%%%%%%%%%%%%%%%%%%%%%%%%%%%%%%%%%%%%%%%%%%%%%%%%%%%%%%%%%%%%%%%%%%%%%%%%%%%%%%%%%%%%%%%%%%%%%%%%%%%
%%%%%%%%%%%%%%%%%%%%%%%%%%%%%%%%%%%%%%%%%%%%%%%%%%%%%%%%%%%%%%%%%%%%%%%%%%%%%%%%%%%%%%%%%%%%%%%%%%%%%%%%%%%%
%%%%%%%%%%%%%%%%%%%%%%%%%%%%%%%%%%%%%%%%%%%%%%%%%%%%%%%%%%%%%%%%%%%%%%%%%%%%%%%%%%%%%%%%%%%%%%%%%%%%%%%%%%%%
%%%%%%%%%%%%%%%%%%%%%%%%%%%%%%%%%%%%%%%%%%%%%%%%%%%%%%%%%%%%%%%%%%%%%%%%%%%%%%%%%%%%%%%%%%%%%%%%%%%%%%%%%%%%
\subsection{Numerical Solution of the Inverse Problem}

To solve equation \eqref{sip310} numerically, we apply the collocation method.

The current $J$ is sought in the form of a linear combination $\sum\limits_{j=1}^N c_j v_j(x)$ of piecewise constant basis functions. The collocation points are defined as follows: $r_i\in D$ ($i=1,\ldots,N.$)

\vspace{0.7em}

Below we describe the conditions and results of computational experiments.

We consider the cubic inhomogeneity region $P$ with the edge length of $0.15 m.$ The area of such a size is quite consistent with the objects studied, e.g., in breast cancer diagnosis.

%Был проведен ряд вычислительных экспериментов при различных условиях. Во-первых, менялась расположение массива приемников (точек измерения полного поля): параллельно координатным плоскостям $X0Y$ или $X0Z.$ Во-вторых, варьировалось расстояние от ближайшего приемника до рассеивателя (на рисунках этот параметр обозначен через $d_r$).  В третьих, проводились расчеты с и без искусственно вводимых погрешностей (шумов). Расстояние $d_s$ от источника падающей волны $u_0$ до тела фиксировано: $d_s=0.003$м.

There was carried out a series of computational experiments under various conditions. First, the array of receivers (points for measuring the total field) was defined on planes parallel to $xy-$ or $xz-$ planes. Second, the distance from the nearest receiver to the scatterer was varied (this parameter is denoted by $d_r$). Third, the calculations were made with or without artificially introduced errors (noise). The distance $d_s $ from the source of the incident wave $ u_0 $ to the body $P$ is fixed: $ d_s = 0.003  m.$

%На всех представленных ниже рисунках графически представлены значения вещественной (под буквой (a)) и мнимой (под буквой (b)) частей искомой функции $k(x).$

All the figures below graphically represent the values of the real (under the letter (a)) and imaginary (under the letter (b)) parts of the function $k(x).$

%Предполагается, что в рассматриваемой модельной задаче область неоднородности $P$ характеризуется известной комплекснозначной функцией $k(x).$ Графическое представление вещественной и мнимой части $k(x)$ приведены на рисунках \ref{fig}(a) и \ref{fig}(b) соответственно.

We consider the following sample problem (SP): the inhomogeneity domain $P$ is characterized by a given  complex-valued function $k(x).$ The graphical representation of the real and imaginary parts of the function $k(x)$ is in the figures \ref{fig}(a) and \ref{fig}(b), respectively.

\begin{figure}
\begin{center}
\begin{tabular}{cc}
\resizebox*{6cm}{!}{\includegraphics{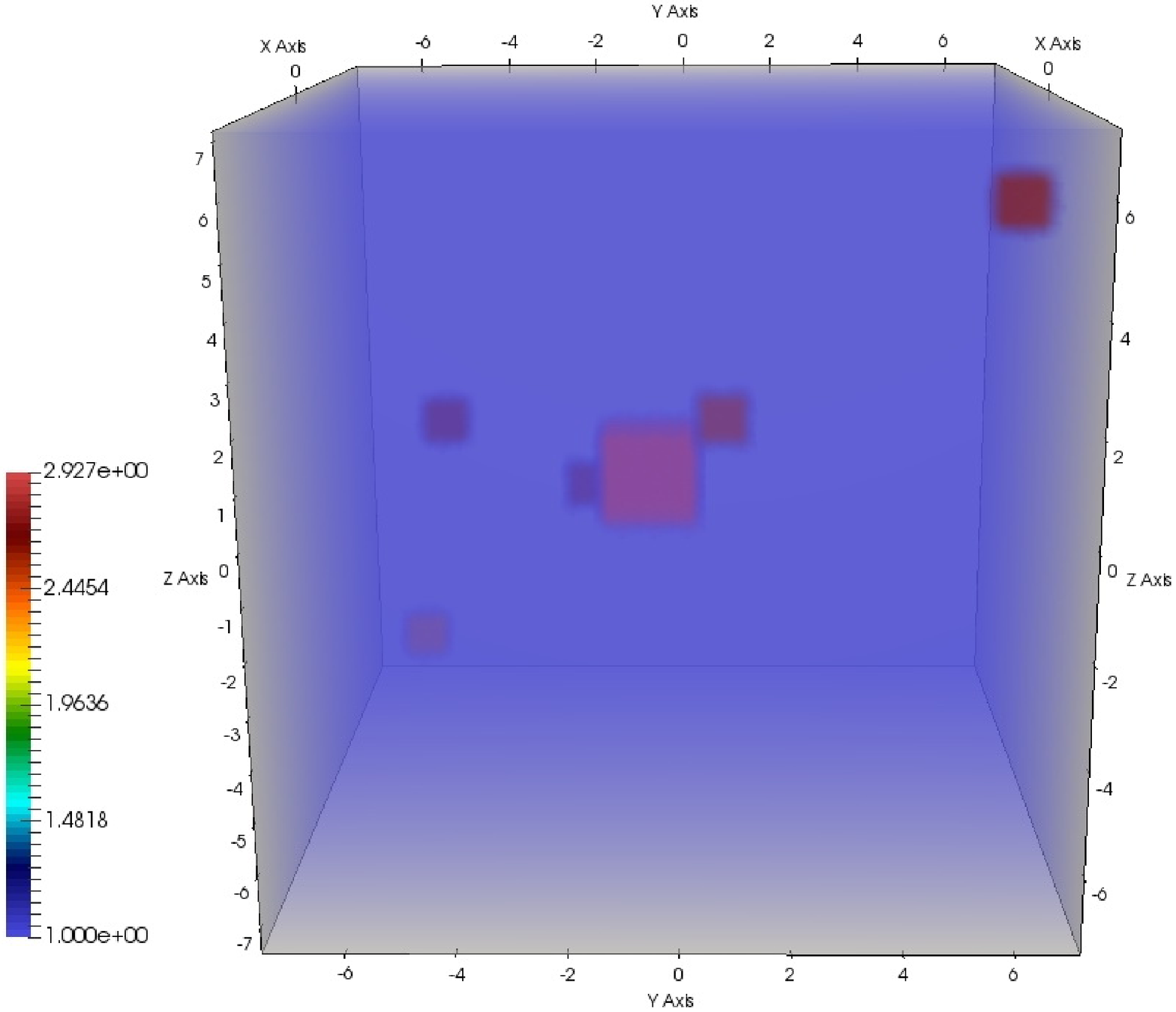}} & \resizebox*{6cm}{!}{\includegraphics{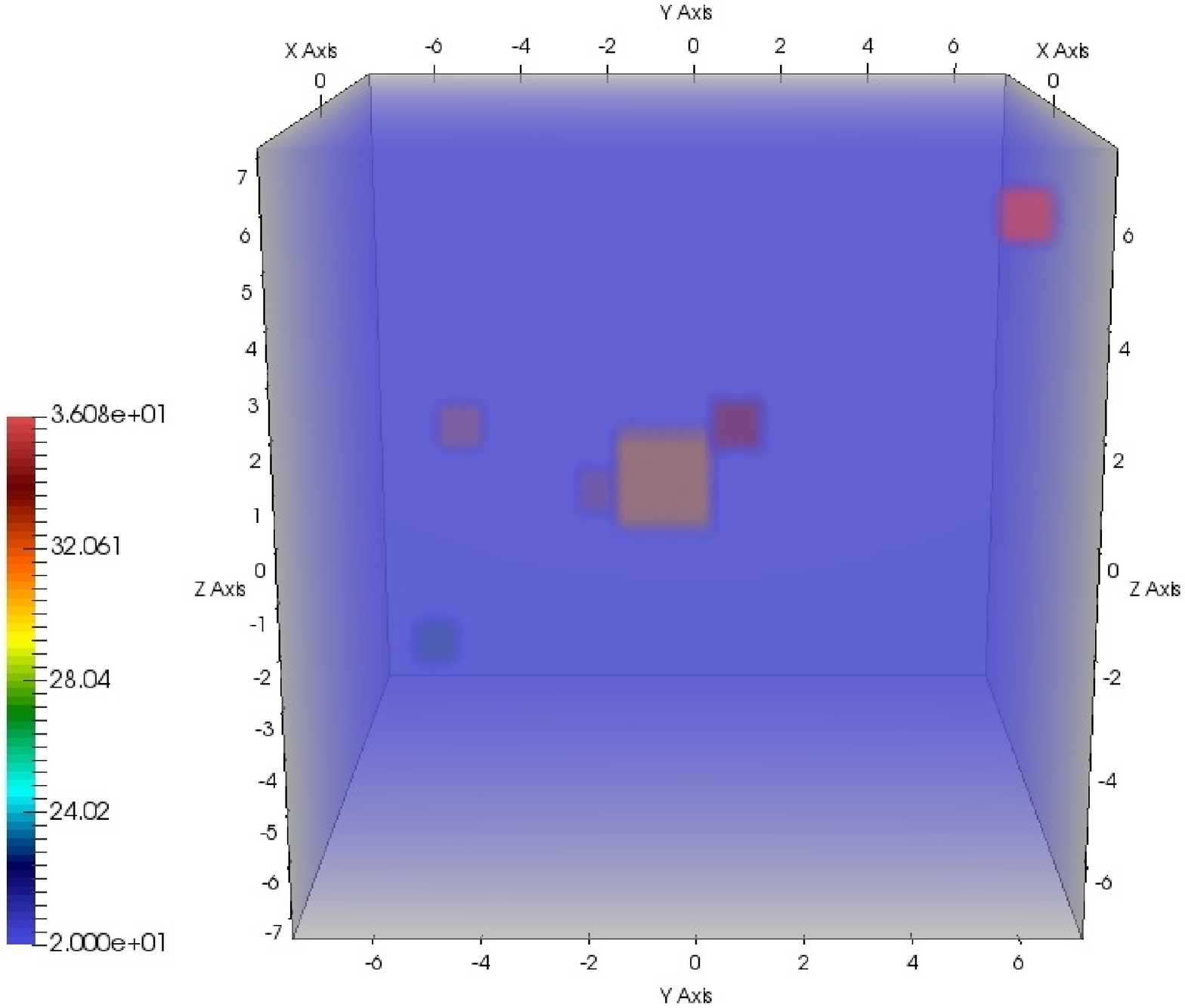}}\\
(a) & (b)
\end{tabular}
\caption{Exact solution of SP: the real  (a) and the imaginary (b) parts of the function $k(x)$.}
\label{fig}
\end{center}
\end{figure}

%По известной функции $k(x)$  была решена прямая задача дифракции, определено поле $u$ и "ток" $J$ в области неоднородости $P,$ необходимые для моделирования падающего поля в обратной задаче.

Given the function $k (x),$ we first solve  the direct diffraction problem. As a result, the field $u(x)$ and the "current" $J(x)$ are determined in the region $P.$ These functions are then used for modeling the total field of the inverse problem.

%На рисунках \ref{fig1}(a) и \ref{fig1}(b)  представлено решение обратной задачи ($\Re k(x)$ и $\Im k(x)$), найденное двухшаговым приближенным методом при точных входных данных: в качестве известных данных на приемниках использованы значения известного падающего поля $u_0$ согласно \eqref{sip35} и полученные в результате решения прямой задачи дифракции значения рассеянного поля $u_s(x)$ (без внесения дополнительных погрешностей -- шумов).

Finally, the solution to the inverse is found using the two-step approximate method. The functions $\Re k(x) $ and $\Im k(x),$ shown in figures \ref{fig1}(a) and \ref{fig1}(b), correspond to the exact input data (no noise is added). This data is the values of the scattered field $u_s=u - u_0$ at the receivers points. Here $u_0$ is the given incident wave and $u$ is the solution to the direct problem of diffraction.

\begin{figure}
\begin{center}
\begin{tabular}{cc}
\resizebox*{6cm}{!}{\includegraphics{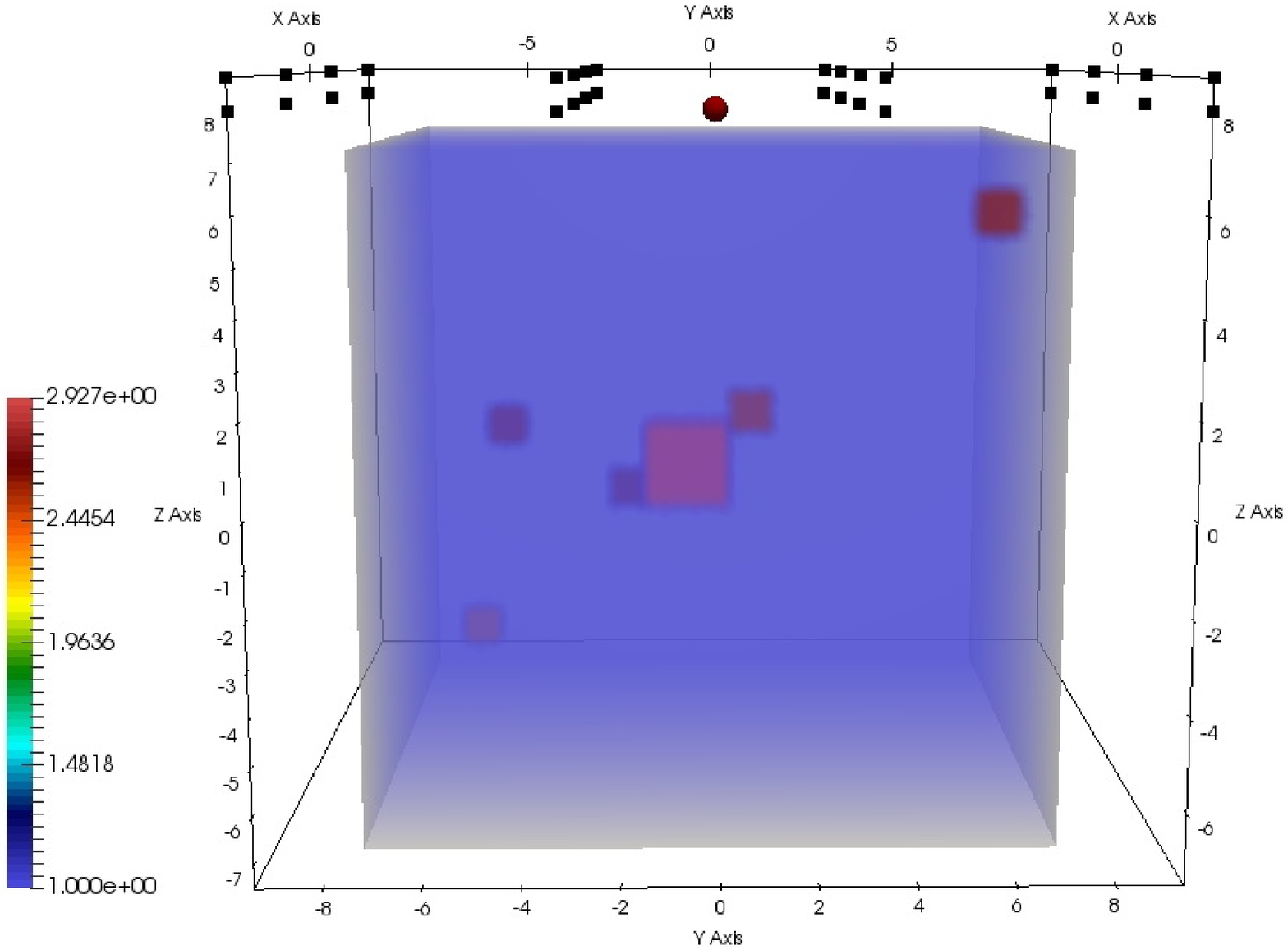}} & \resizebox*{6cm}{!}{\includegraphics{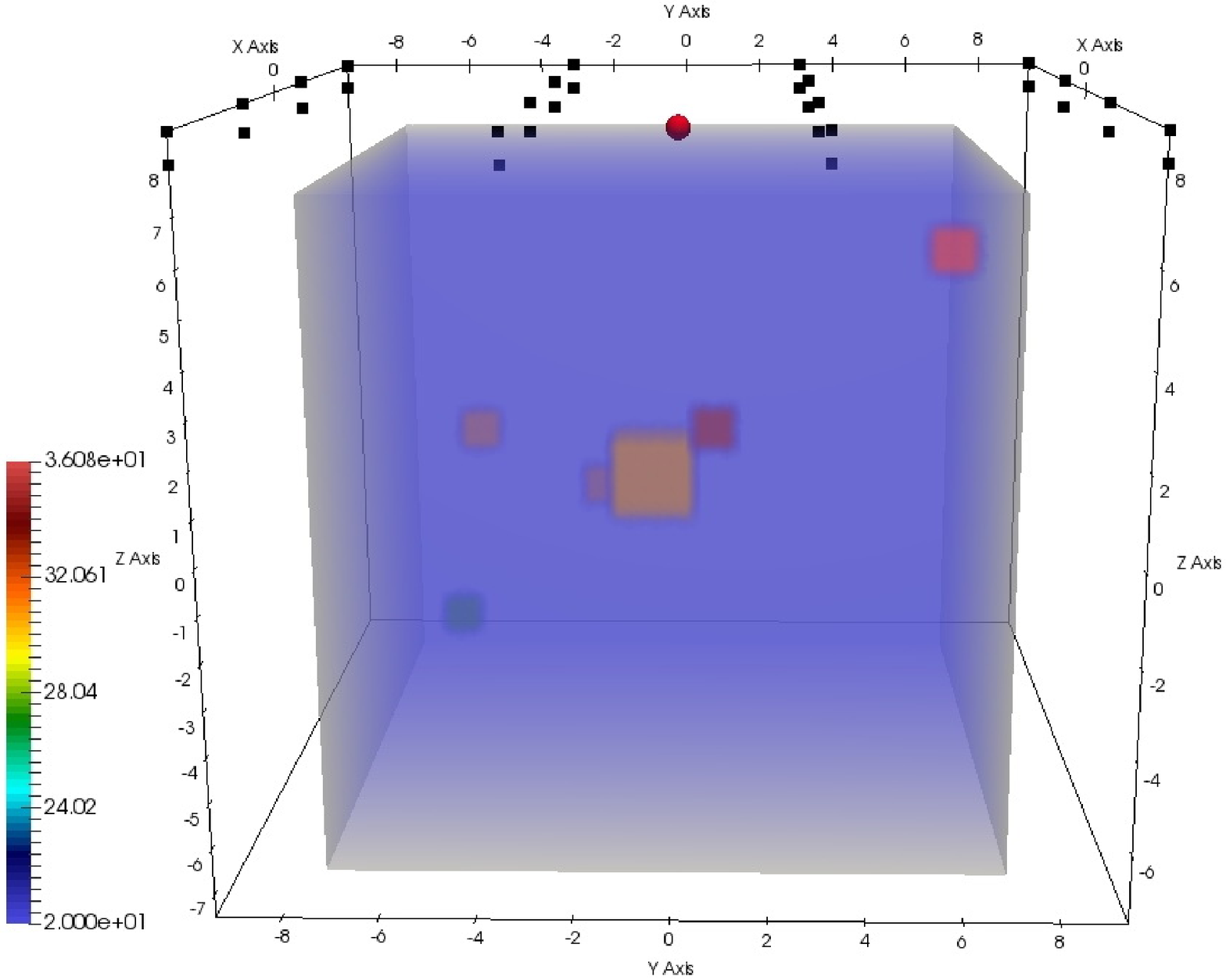}}\\
(a) & (b)
\end{tabular}
\caption{Approximate solution of SP: $d_r=0.005 m$, the receivers are in planes parallel to $xy-$plane, no noise is added.}
\label{fig1}
\end{center}
\end{figure}

%В следующем эксперименте (см. рисунки \ref{fig2}(a) и \ref{fig2}(b)) расстояние от тела до приемников увеличено до $0.5$м. В этом случае приближенное решение уже существенно отличается от известного точного: рисунки изобилуют многочисленными "артефактами".

In the next experiment (see Figures \ref{fig2}(a) and \ref{fig2}(b)), the distance from the body to the receivers is increased up to $0.05 m.$ In this case, the approximate solution significantly differs from the exact one. One can see numerous "artifacts" in the figures.
\begin{figure}
\begin{center}
\begin{tabular}{cc}
\resizebox*{6cm}{!}{\includegraphics{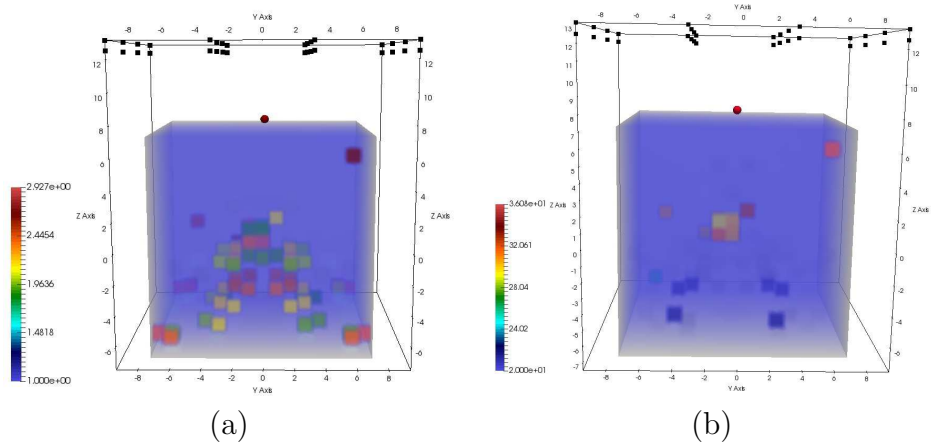}} & \resizebox*{6cm}{!}{\includegraphics{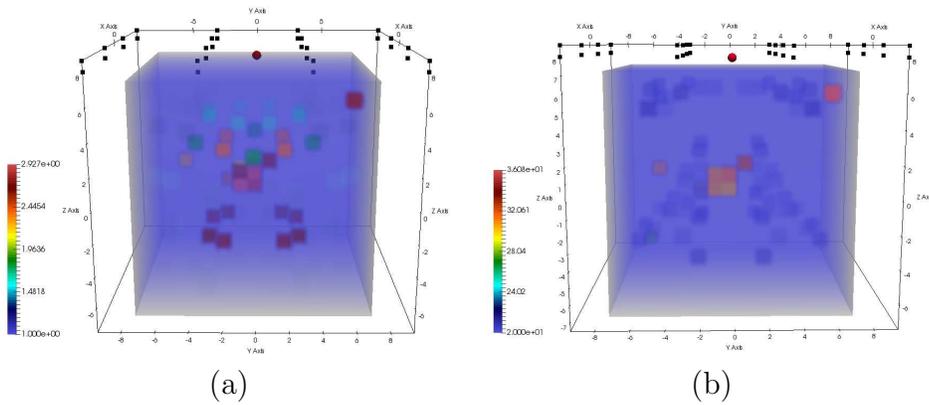}}\\
(a) & (b)
\end{tabular}
\caption{Approximate solution of SP: $d_r=0.05 m$, the receivers are in planes parallel to $xy-$plane, the input data is noisy.}
\label{fig2}
\end{center}
\end{figure}

%В третьем эксперименте (см. рисунки \ref{fig3}(a) и \ref{fig3}(b)) расстояние $d_r$ вновь уменьшено до $0.005$м, но в набор входных данных привнесены погрешности. Здесь снова решение очень неточное: помимо появления посторонних "артефактов" здесь также наблюдается пропадание "истинных" неоднородностей.

In the third experiment (see Figures \ref{fig3}(a) and \ref{fig3}(b)), the distance $d_r$ is reduced to $0.005 m,$ whereas the data is noisy. Here again, the solution is quite inaccurate and shows the "artifacts".  In addition, there is also a loss of "true" heterogeneities.
\begin{figure}
\begin{center}
\begin{tabular}{cc}
\resizebox*{6cm}{!}{\includegraphics{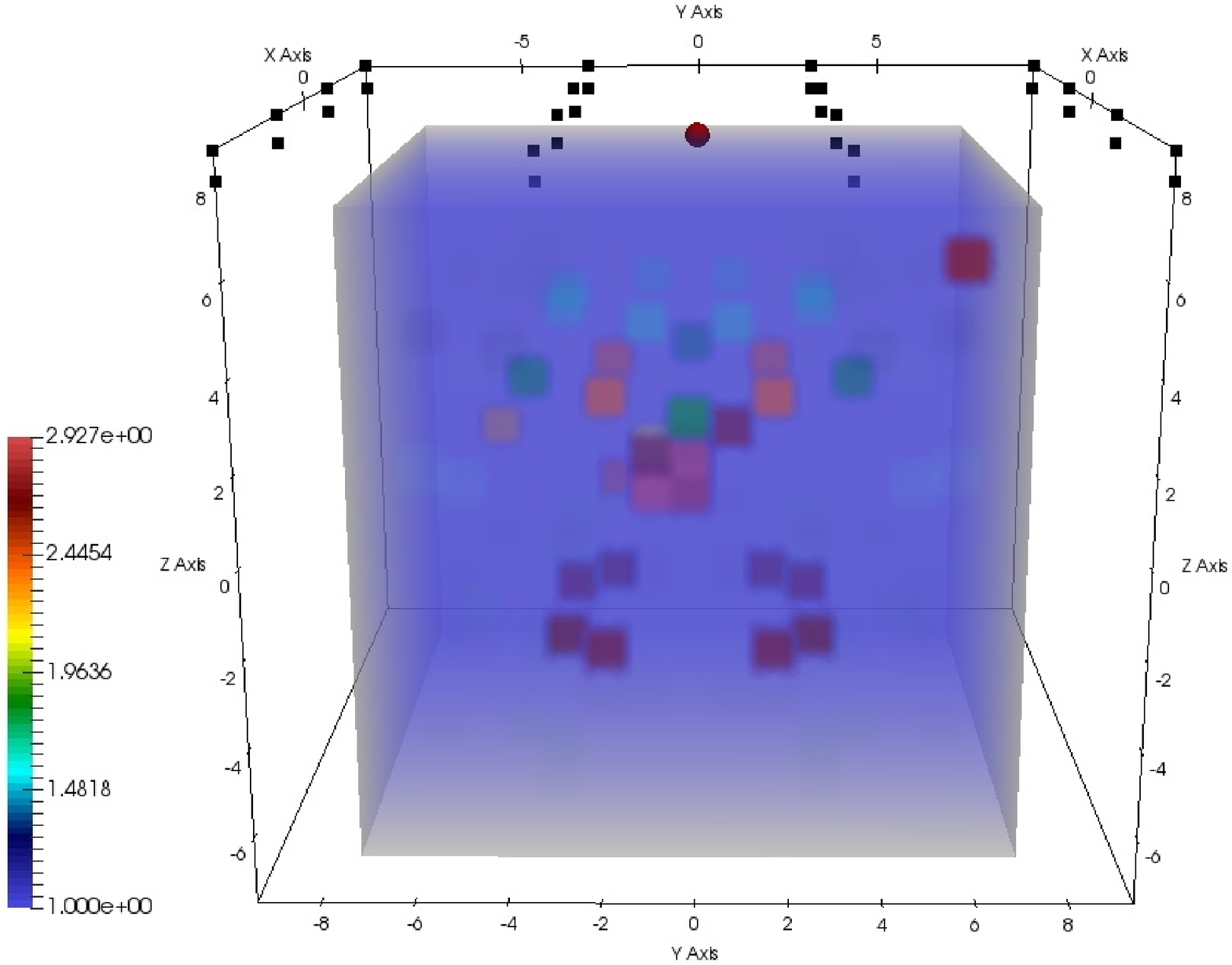}} & \resizebox*{6cm}{!}{\includegraphics{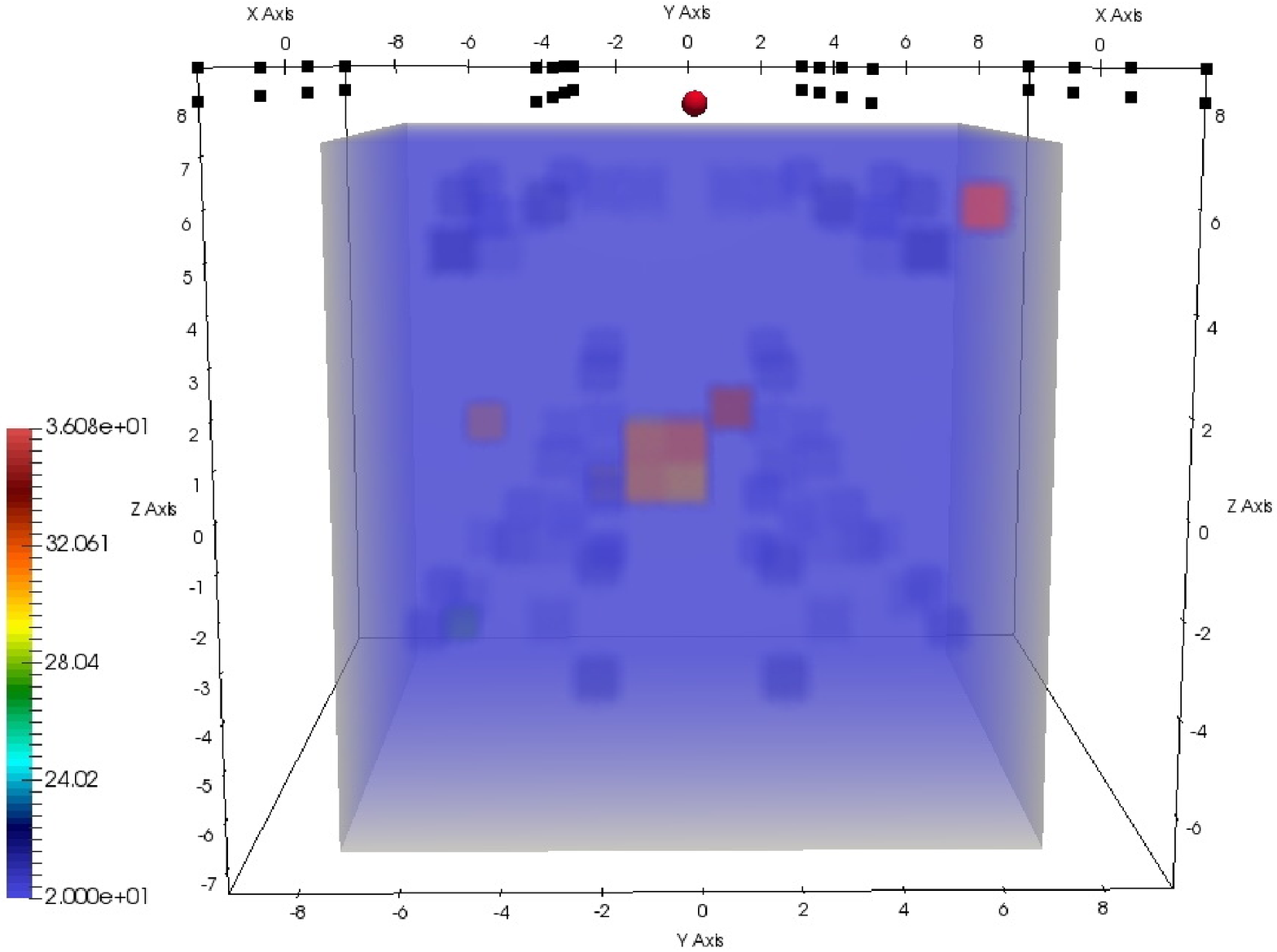}}\\
(a) & (b)
\end{tabular}
\caption{Approximate solution of SP: $d_r=0.05 m$, the receivers are in planes parallel to $xy-$plane, the input data is noisy.}
\label{fig3}
\end{center}
\end{figure}

%В четвертом эксперименте (см. рисунки \ref{fig4}(a) и \ref{fig4}(b)) расстояние $d_r=0.005$м, входные данные являются зашумленными и изменено расположение приемников.

In the fourth experiment (see Figures \ref{fig4}(a) and \ref{fig4}(b)), the distance $d_r$ equals $0.005 ,m$ the input data is noisy, and the location of the receivers is changed.
\begin{figure}
\begin{center}
\begin{tabular}{cc}
\resizebox*{6cm}{!}{\includegraphics{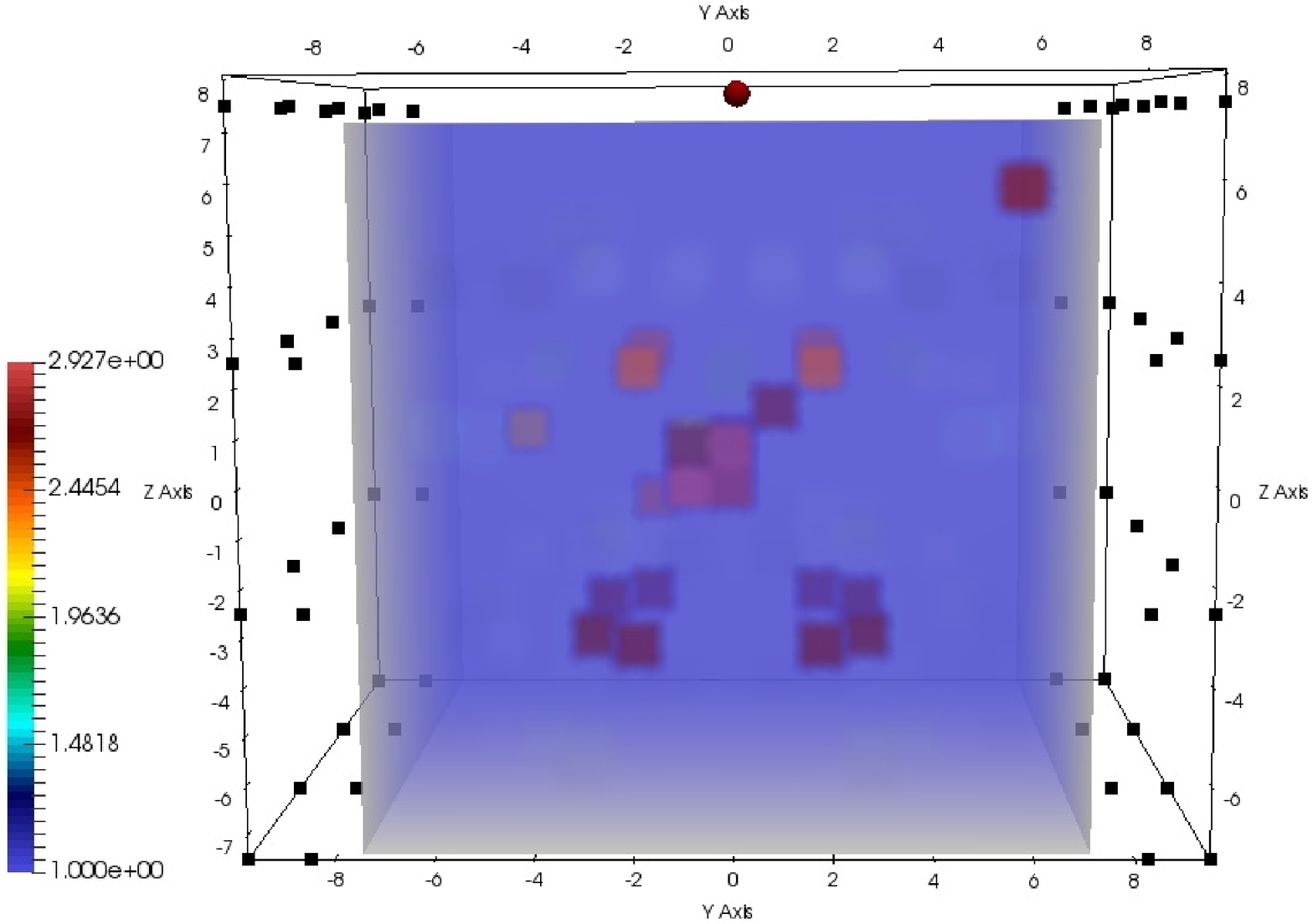}} & \resizebox*{6cm}{!}{\includegraphics{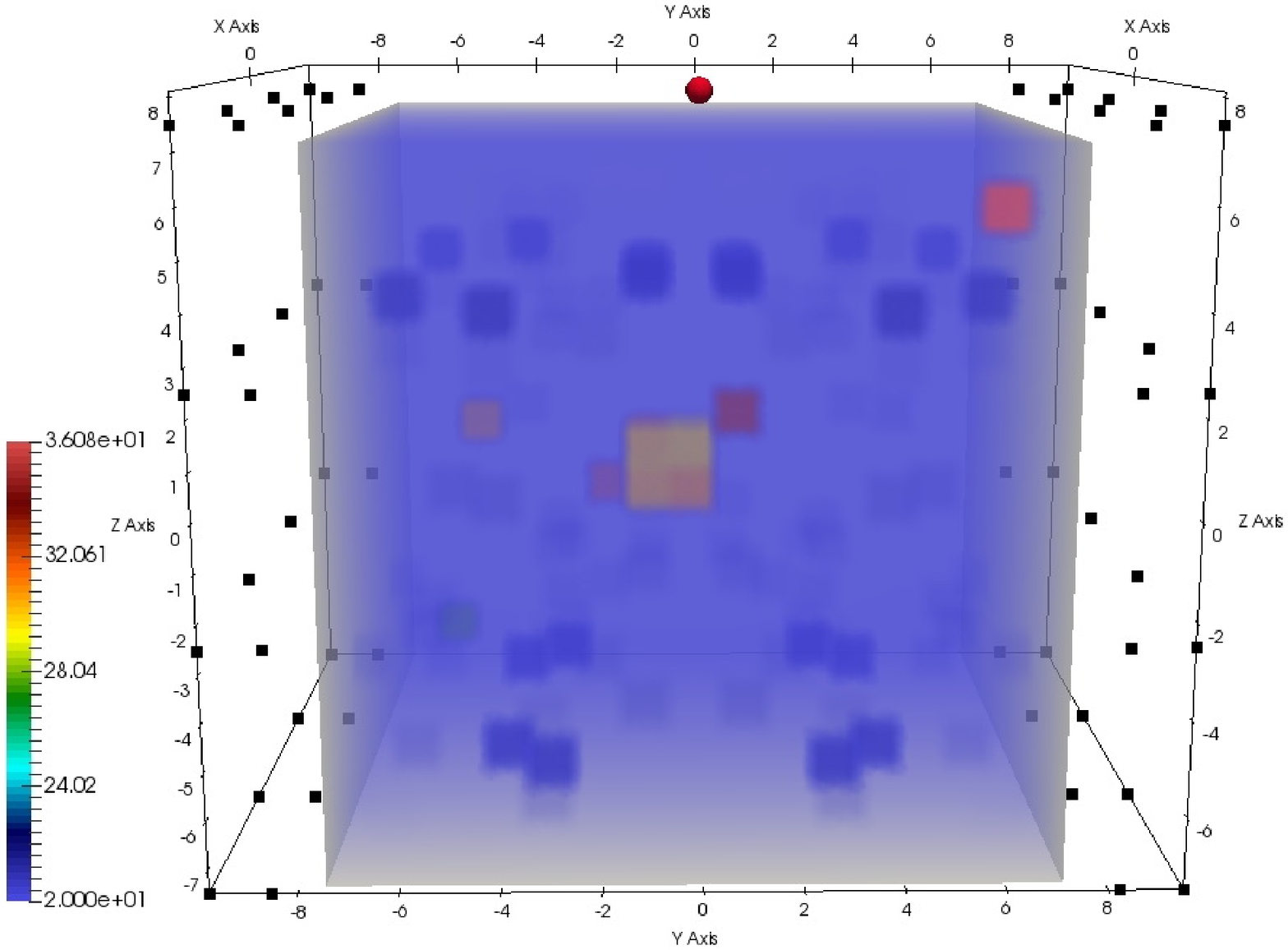}}\\
(a) & (b)
\end{tabular}
\caption{Approximate solution of SP: $d_r=0.05 m$, the receivers are in planes parallel to $xz-$plane, the input data is noisy.}
\label{fig4}
\end{center}
\end{figure}

%В заключительном тесте (см. рисунки \ref{fig5}(a) и \ref{fig5}(b))  изменено расположение и источника (теперь он расположен справа от области неоднородности), и приемников поля. В этом случае получено более точное решение: "артефакты" присутствуют, но отвечающие им значения коэффициента преломления мало отличаются от значения для фона.

In the final test (see Figures \ref{fig5}(a) and \ref{fig5}(b)), we changed the location of both the field source and the receivers. In this case, a more accurate solution was obtained.  Despite some "artifacts", the values of the refractive index just slightly differ from the true ones.
\begin{figure}
\begin{center}
\begin{tabular}{cc}
\resizebox*{6cm}{!}{\includegraphics{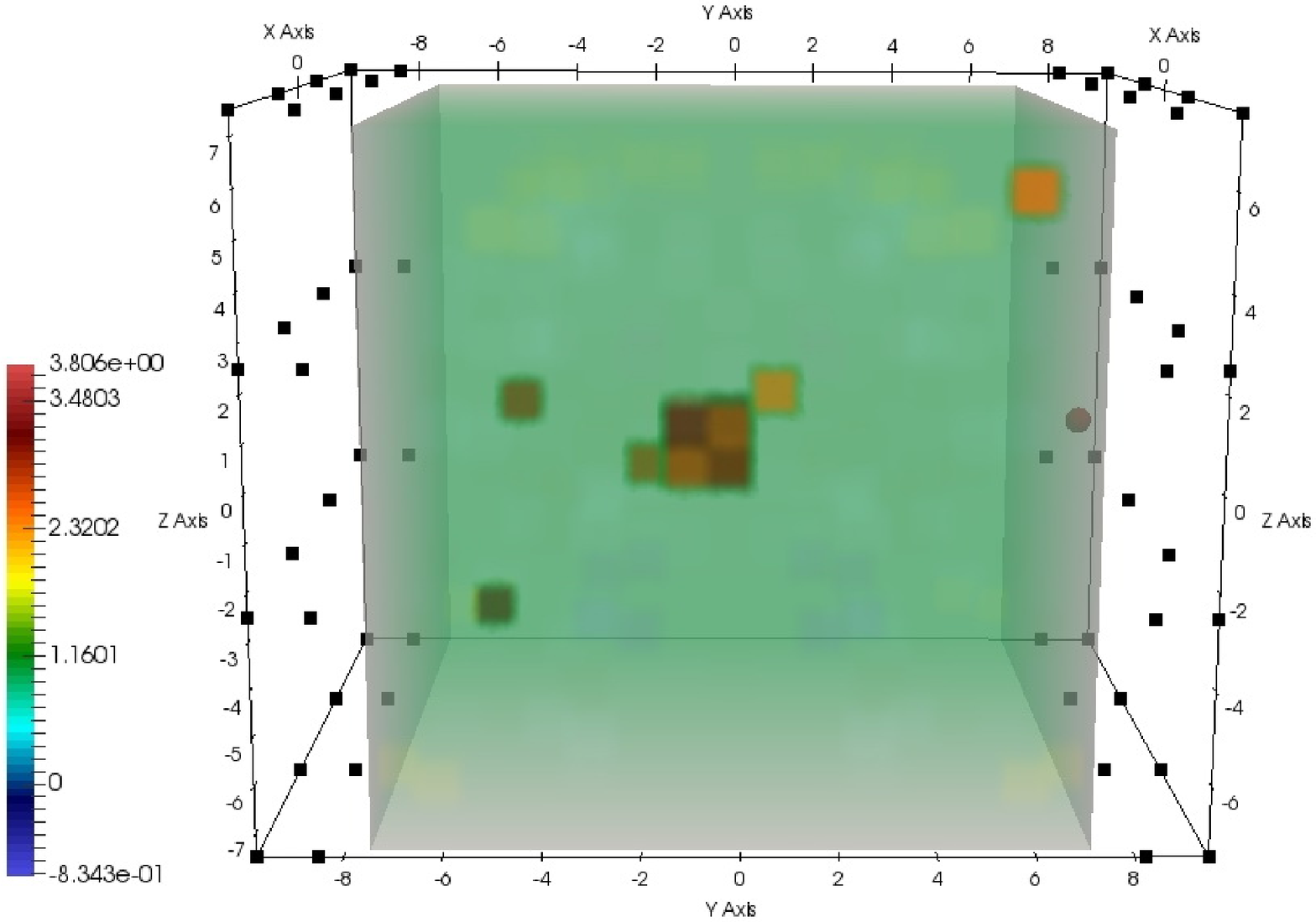}} & \resizebox*{6cm}{!}{\includegraphics{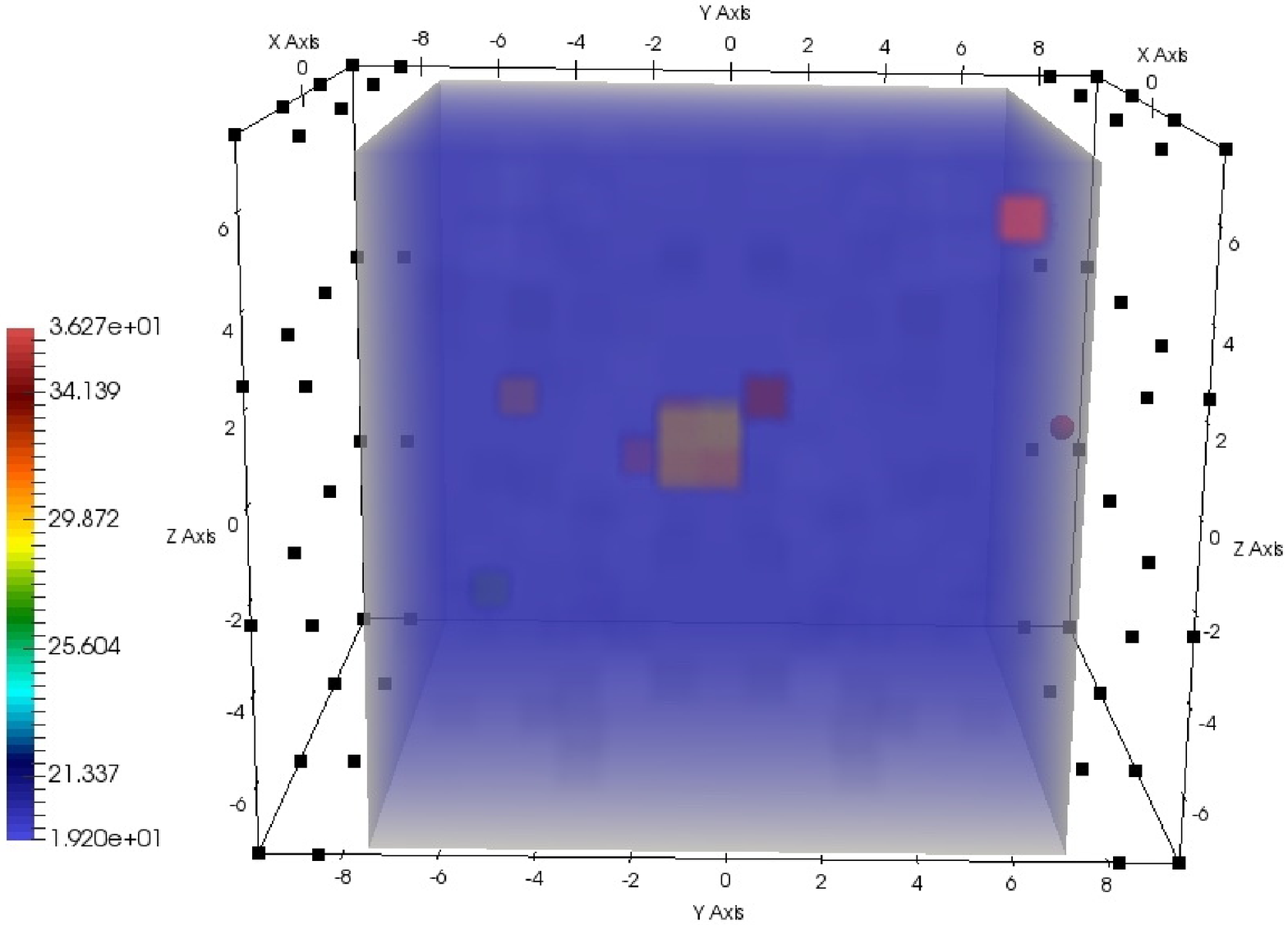}}\\
(a) & (b)
\end{tabular}
\caption{Approximate solution of SP: $d_r=0.05 m$, the receivers are in planes parallel to $xz-$plane, the input data is noisy.}
\label{fig5}
\end{center}
\end{figure}

\section*{Conclusion}
%Авторами разработан и теоретически обоснован новый метод решения задачи восстановления коэффициента преломления в неоднородном теле по измерениям ближнего поля, основанный на решении линейного интегрального уравнения по области неоднородности. Проведенные вычислительные эксперименты подтверждают эффективность предложенного алгоритма для решения решения обратной задачи дифракции.

%Предполагается проведение экспериментальных исследований с использованием разработанного метода.

The authors developed and theoretically justified a new method for solving the problem of reconstructing the refractive index in an inhomogeneous body using near-field data. The method involves solving a linear integral equation in the region of inhomogeneity.

The computational experiments confirm the efficiency of the developed algorithm. It is supposed to carry out experimental studies using the proposed method.

\end{document}